
\documentclass[a4paper,reqno,11pt]{amsart}
\usepackage[includehead,includefoot,margin=25mm]{geometry}
\usepackage{times}
\usepackage[english]{babel}
\usepackage{amsmath}
\usepackage{amssymb}
\usepackage{amscd}
\usepackage{amsthm}
\usepackage{euscript}
\usepackage[all]{xy}
\usepackage{amsfonts,amsbsy,amssymb,amsmath,amsthm}


\usepackage{enumerate}
\usepackage{color}

\newcommand{\homo}[0]{\ensuremath{\operatorname{Hom}}}
\newcommand{\GA}[0]{\ensuremath{\mathbb{G}_{\mathrm{a}}}}
\newcommand{\GM}[0]{\ensuremath{\mathbb{G}_{\mathrm{m}}}}
\newcommand{\OO}[0]{\ensuremath{\mathcal{O}}}
\newcommand{\Loc}[0]{\ensuremath{\operatorname{Loc}}}
\newcommand{\tail}[0]{\ensuremath{\operatorname{tail}}}
\newcommand{\rank}[0]{\ensuremath{\operatorname{rank}}}
\newcommand{\SF}[0]{\ensuremath{\mathcal{S}}}

\newcommand{\supp}[0]{\ensuremath{\operatorname{Supp}}}
\newcommand{\CF}[0]{\ensuremath{\mathcal{C}}}
\newcommand{\KK}[0]{\ensuremath{\mathbf{k}}}

\newcommand{\ord}[0]{\ensuremath{\operatorname{ord}}}
\newcommand{\face}[0]{\ensuremath{\operatorname{face}}}

\newcommand{\partiala}[0]{\ensuremath{D}}


\newtheorem{theorem}{Theorem}[section]
\newtheorem*{theorem*}{Theorem}

\newtheorem{lemma}[theorem]{Lemma}
\newtheorem{proposition}[theorem]{Proposition}

{\theoremstyle{remark}
  \newtheorem{remark}[theorem]{Remark}}
{\theoremstyle{definition}
  \newtheorem{definition}[theorem]{Definition}

  \newtheorem{example}[theorem]{Example}

}

\DeclareMathOperator{\Spec}{Spec}
\DeclareMathOperator{\Supp}{Supp}
\DeclareMathOperator{\Hom}{Hom}
\DeclareMathOperator{\Aut}{Aut}

\DeclareMathOperator{\pol}{Pol}

\DeclareMathOperator{\cone}{cone}

\DeclareMathOperator{\SL}{SL}
\DeclareMathOperator{\Lie}{Lie}
\DeclareMathOperator{\LIE}{Lie_{K_0}}
\DeclareMathOperator{\Der}{Der}

\DeclareMathOperator{\relint}{rel.int}

\DeclareMathOperator{\did}{div}

\def\TT{{\mathbb T}}
\def\ZZ{{\mathbb Z}}

\def\QQ{{\mathbb Q}}

\def\DD{{\mathbb D}}

\def\DDD{\mathcal{D}}

\def\sl{\mathfrak{sl}}
\def\gf{\mathfrak{g}}
\def\Nf{\mathfrak{N}}
\def\nf{\mathfrak{n}}

\begin{document}
\date{} \title[ Lie algebras of vertical derivations on
T-varieties]{Lie algebras of vertical derivations on \\ semiaffine
  varieties with torus actions}

\author{Ivan Arzhantsev} %
\thanks{The first author was supported by the grant RSF 19-11-00172} %
\address{National Research University Higher School of Economics,
  Faculty of Computer Science, Pokrovsky Boulevard 11, Moscow, 109028
  Russia} %
\email{arjantsev@hse.ru}

\author{Alvaro Liendo} %
\thanks{The second author was partially supported by the projects
  Fondecyt Regular 1160864 and 1200502} %
\address{Instituto de Matem\'atica y F\'isica, Universidad de Talca,
  Casilla 721, Talca, Chile} %
\email{aliendo@inst-mat.utalca.cl}

\author{Taras Stasyuk} %
\address{Department of Mathematics and Mechanics, Lomonosov Moscow
State University, Leninskie Gory~1, Moscow, 119991 Russia}%
\email{taras4834@gmail.com}

\subjclass[2010]{Primary 13N15, 14L30; \ Secondary 14M25, 17B66}

\keywords{Torus action, divisorial fan, additive group action, Demazure 
root, derivation, Lie algebra}

\maketitle

\begin{abstract}
  Let $X$ be a normal variety endowed with an algebraic torus
  action. An additive group action $\alpha$ on $X$ is called vertical
  if a general orbit of $\alpha$ is contained in the closure of an
  orbit of the torus action and the image of the torus normalizes the
  image of $\alpha$ in $\Aut(X)$. Our first result in this paper is a
  classification of vertical additive group actions on $X$ under the
  assumption that $X$ is proper over an affine variety. Then we
  establish a criterion as to when the infinitesimal generators of a
  finite collection of additive group actions on $X$ generate a
  finite-dimensional Lie algebra inside the Lie algebra of derivations
  of $X$.
\end{abstract}

\section{Introduction}

Let $\KK$ be a field of characteristic zero.  The $n$-dimensional
algebraic torus $\TT$ over $\KK$ is the algebraic group $\GM^n$, where
$\GM$ is the multiplicative group, i.e., the set $\KK^*$ of non-zero
elements in the base field endowed with its natural structure of
algebraic group under multiplication. A $\TT$-variety is a normal
variety endowed with a faithful action of the algebraic torus. The
complexity of a $\TT$-variety is the codimension of a general orbit
and since the action is faithful the complexity equals
$\dim X-\dim \TT$. There are well known combinatorial descriptions of
$\TT$-varieties. The $\TT$-varieties of complexity zero are called
toric varieties and were first introduced by Demazure in \cite{De},
see the textbooks \cite{Oda,Fu,CLS} for a modern account on the
subject. They are described by certain collections of rational
polyhedral cones called fans. For higher complexity,
several partial classifications were given until a full classification
of $\TT$-varieties was achieved in \cite{AlHa06,AHS08}, see
\cite{AIPSV} and the references therein for a historical account.

Let now $\GA$ be the additive group, i.e., the base field $\KK$
endowed with its natural structure of algebraic group under
addition. Let $X$ be a variety and $\alpha:\GA\times X\rightarrow X$ be a
$\GA$-action on $X$. The \emph{infinitesimal generator} of the action is the
derivation $\partiala:\OO_X\rightarrow \OO_X$ of the structure sheaf
of $X$ given
by
$$\Gamma(U,\partiala):\Gamma(U,\OO_X)\rightarrow\Gamma(U,\OO_X),\qquad
f\mapsto \left[\frac{d}{dt}(\alpha^*f)\right]_{t=0}\,.$$ An additive
group action on a $\TT$-variety $X$ is called \emph{compatible} if the
image of $\TT$ in $\Aut(X)$ is contained in the normalizer of the
image of the subgroup $\GA$. A compatible $\GA$-action is called \emph{vertical}
if, moreover, a general $\GA$-orbit is contained in the closure of a
$\TT$-orbit. Letting $\KK(X)$ be the field of rational functions on
$X$, we have that a compatible $\GA$-action is vertical if and only if
$\KK(X)^{\TT}\subseteq \KK(X)^{\GA}$. In \cite{De} a description of
vertical $\GA$-actions on toric varieties was given and in
\cite{Lie10,Li} vertical $\GA$-actions on affine
$\TT$-varieties were described. Recall that an algebraic variety $X$ is
called semiaffine if the morphism $X\rightarrow \Spec \Gamma(X,\OO_X)$
induced by the global sections functor is proper \cite{GoLa73}. In
particular, affine and complete varieties are
semiaffine. Our first main result contained in
Theorem~\ref{ver-regular} is a generalization of such descriptions
to vertical $\GA$-actions on semiaffine varieties. Our
classification is given in terms of the combinatorial description of
$\TT$-varieties in \cite{AlHa06,AHS08}. The infinitesimal generators
of vertical $\GA$-actions are in correspondence with certain triples
$(\phi,\lambda_\rho,\chi^e)$ where $\phi$ is a rational function in
$\KK(X)^\TT$, $\lambda_\rho$ is a $1$-parameter subgroup of $\TT$
corresponding to a ray of the fan $\Sigma$ of the normalization of a
general $\TT$-orbit closure and $\chi^e$ is a character of $\TT$ such
that $\langle\rho,e\rangle=-1$ where $\langle\cdot,\cdot\rangle$ 
is the canonical pairing realizing
the $1$-parameter subgroup lattice of a torus as the dual of the
character lattice. We denote the infinitesimal generator of the
$\GA$-action by $D_{\phi,\rho,e}$ and we also call them \emph{root
derivations} since the image of the~$\GA$-action in $\Aut(X)$ is 
a~root subgroup.

Let now $\DD=\{D_{\phi_i,\rho_i,e_i}\}_{i=1}^m$ be a finite set of
root derivations.  The set $\DD$ is called \emph{cyclic} if
${\langle \rho_{i+1}, e_i\rangle>0}$ for all $i=1,\ldots,m$, where we
set $\rho_{m+1}:=\rho_1$. Furthermore, a set $\DD$ is called
\emph{simple} if $\prod_{i=1}^m\phi_i\in\KK$ and  
$\langle \rho_i,e_i\rangle=-1$, $\langle \rho_{i+1},e_i\rangle=1$ and
$\langle \rho,e_i\rangle=0$ for all $i$ and for all rays $\rho$ of the fan $\Sigma$
different from $\rho_i$ and $\rho_{i+1}$. Our second main result in
this paper is the following theorem, see Theorem~\ref{findim} for a
more precise statement.

\begin{theorem*}
  Let $\DD=\{D_{\phi_i,\rho_i,e_i}\}_{i=1}^m$ be a finite set of root
  derivations. Then the Lie algebra generated by $\DD$ over $\KK$ is
  finite dimensional if and only if every cyclic subset of $\DD$ is
  simple.
\end{theorem*}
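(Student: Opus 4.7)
The plan is to reduce the problem to analyzing iterated Lie brackets of root derivations via an explicit commutator formula. My first step is to derive, from the construction of $D_{\phi,\rho,e}$ in Theorem~\ref{ver-regular}, a bracket formula for two root derivations. I expect it to have the schematic shape
\[
 [D_{\phi,\rho,e},\,D_{\psi,\rho',e'}] \;=\; \langle\rho',e\rangle\, D_{\phi\psi,\,\rho',\,e+e'} \;-\; \langle\rho,e'\rangle\, D_{\phi\psi,\,\rho,\,e+e'}
 \;+\;(\text{terms where the derivations act on }\phi,\psi),
\]
with the understanding that a term $D_{\cdot,\rho'',e+e'}$ is absorbed into the zero derivation whenever the triple is not admissible. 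The extra terms involving derivatives of $\phi$ or $\psi$ only contribute when the torus-invariant functions $\phi_i$ are themselves moved by other $\GA$-actions, and they always come with a factor $\phi\psi$ or lower; this is where the condition $\prod\phi_i\in\KK$ will be critical.

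The second step uses this formula to restrict the possible weights of iterated brackets. A bracket $[D_{\phi_i,\rho_i,e_i},D_{\phi_j,\rho_j,e_j}]$ is forced to vanish unless $\langle\rho_j,e_i\rangle>0$ or $\langle\rho_i,e_j\rangle>0$. Hence a nonzero iterated bracket of elements of $\DD$ corresponds to a finite word $(i_1,\ldots,i_k)$ in the indices such that every two consecutive entries satisfy the strict positivity condition, and the accumulated weight lies in $e_{i_1}+\cdots+e_{i_k}$. This shows that finite-dimensionality is controlled exactly by cyclic subsets of $\DD$: non-cyclic chains terminate at derivations in only finitely many weights, while cyclic chains allow unbounded iteration and therefore must be examined carefully.

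For sufficiency, assume every cyclic subset of $\DD$ is simple. In a simple cyclic subset $\{D_{\phi_i,\rho_i,e_i}\}_{i=1}^m$, one loop around the cycle produces a root derivation with character $e_1+\cdots+e_m$ satisfying $\langle\rho,e_1+\cdots+e_m\rangle=0$ for every ray $\rho$ of $\Sigma$ (the contributions $-1$ at $\rho_i$ and $+1$ at $\rho_{i+1}$ cancel, and every other ray contributes $0$), and the function factor becomes the scalar $\prod\phi_i\in\KK$. Such a derivation is essentially toric of weight zero on each $\TT$-orbit closure, which forces the iteration to collapse modulo a small, explicit $\sl_2$-type structure. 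Combining this over all cyclic subsets yields a finite-dimensional Lie algebra.

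For necessity, suppose $\DD'\subseteq\DD$ is cyclic but not simple. I would produce an infinite linearly independent family by iterating the bracket around the cycle. If $\prod\phi_i\notin\KK$, then one full loop replaces the attached function by a nonconstant multiple $c\prod\phi_i$ of it, and $k$ loops give $c^k\!\left(\prod\phi_i\right)^k$; these are linearly independent as rational functions, so the resulting derivations are linearly independent. If instead the pairing conditions fail — either $\langle\rho_{i+1},e_i\rangle\geq 2$ for some $i$, or $\langle\rho,e_i\rangle\neq 0$ for some ray $\rho\notin\{\rho_i,\rho_{i+1}\}$ — then the character $\sum e_i$ has nonzero pairing with some ray of $\Sigma$, so after $k$ loops the accumulated character $k(\sum e_i)$ lies in infinitely many distinct $\TT$-weight spaces, and the corresponding root derivations are linearly independent by weight considerations. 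The main obstacle I expect is this last step: producing the iterated bracket explicitly and verifying it is indeed nonzero requires keeping careful control of the coefficients coming from the pairings and from derivatives of the $\phi_i$'s, especially in the borderline case where only the ``$\langle\rho,e_i\rangle=0$ for other rays'' condition is violated.
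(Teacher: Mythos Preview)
Your commutator formula contains a misconception: root derivations are $K_0$-linear by construction (this is precisely the ``vertical'' condition $D(\KK(X)^\TT)=0$), so there are \emph{no} terms involving derivatives of $\phi$ or $\psi$. The bracket is simply
\[
[D_i,D_j](\chi^u)=\phi_i\phi_j\bigl(\langle\rho_i,e_j\rangle\langle\rho_j,u\rangle-\langle\rho_j,e_i\rangle\langle\rho_i,u\rangle\bigr)\chi^{u+e_i+e_j},
\]
and this cleanness is essential to everything that follows.

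Beyond that, both directions have genuine gaps. For sufficiency, ``the iteration collapses modulo an $\sl_2$-type structure'' is not a proof and is not the right structure: the paper needs a full decomposition showing that maximal cyclic subsets are pairwise disjoint and commute (each generating an $\sl_{r_i}(\KK)$, not just $\sl_2$), that the derivations lying in no cyclic subset generate a \emph{nilpotent} algebra (via an ordering on the rays and a combinatorial finiteness argument on ``appropriate'' vectors), and that the whole algebra is a semidirect product of the semisimple part with a nilpotent ideal. Your observation that $\sum e_i$ pairs to zero with every ray is correct (indeed $\sum e_i=0$ in an almost simple set) but does not by itself bound anything. For necessity in the pairing-violation case, the ``$k$ loops'' idea is the right intuition but does not work as stated: when both $\langle\rho_i,e_j\rangle>0$ and $\langle\rho_j,e_i\rangle>0$ the bracket is not a root derivation, so you cannot simply iterate around the cycle. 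The paper instead reduces $m>2$ to $m-1$ by replacing $D_{m-1},D_m$ with $[D_{m-1},D_m]$, and for $m=2$ builds the specific nested bracket $[\ldots[[D_1,D_2],D_2],\ldots,D_2]$ with $\langle\rho_2,e_1\rangle+1$ copies of $D_2$, checks by explicit computation that it is a nonzero root derivation again associated with $\rho_2$, and shows its pairing with $\overline\rho=\sum_{\rho\in\Sigma(1)}\rho$ strictly increases. This is exactly the obstacle you flagged, and it requires an argument, not a hope.
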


As an application, we extend to the non-complete case and to
arbitrary complexity previous results in \cite{De} and \cite{AHHL}. We
say that a linear algebraic group $G$ acts on a $\TT$-variety $X$
vertically if the action is effective, the image of $G$ in $\Aut(X)$ is normalized by $\TT$,
and $\KK(X)^\TT\subset\KK(X)^G$. In 
Theorem~\ref{demazure} we show that if a linear algebraic group $G$
acts on a $\TT$-variety $X$ vertically, then $G$ is \emph{a group of
  type~A}, i.e., a maximal semisimple subgroup of $G$ is
isomorphic to a factor group of the direct product
${\SL_{r_1}(\KK)\times\ldots\times\SL_{r_s}(\KK)}$ for some positive
integers $r_1,\ldots,r_s$ by a finite central subgroup.

\medskip

The content of the paper is as follows. In Section~\ref{comb-des} we
introduce the combinatorial description of $\TT$-varieties due to
Altmann, Hausen and S\"u\ss\ that we use in the paper. In
Section~\ref{s2} we provide the~announced classification of vertical
additive group actions on semiaffine $\TT$-varieties.  In
Section~\ref{s4} we show that a~simple set $\DD$ such that every
cyclic subset is also simple generates a Lie algebra isomorphic to
$\sl_r(\KK)$ for some $r\in \ZZ_{\ge 2}$. 
The theorem stated above is proved in Section~\ref{s5}. 
Finally, in Section~\ref{s6} we prove the~application 
to linear algebraic groups acting on a $\TT$-variety vertically.

\subsection*{Acknowledgements}

The authors would like to warmly thank the anonymous referee for
useful comments and corrections.

\section{Combinatorial description of $\TT$-varieties}
\label{comb-des}

In this chapter we briefly recall the combinatorial description of
$\TT$-varieties  used in this paper. For more details, see \cite{Oda,Fu,CLS}
for toric varieties and \cite{AlHa06,AHS08,AIPSV} for general $\TT$-varieties.

\subsection{Toric varieties}

In this subsection $\KK$ is a field of characteristic zero not
necessarily algebraically closed. Let $M$ be a lattice of rank $n$ and
$N=\homo(M,\ZZ)$ be its dual lattice. We let
$M_{\QQ}=M\otimes_\ZZ\QQ$, $N_{\QQ}=N\otimes_\ZZ\QQ$, and
$\langle\cdot,\cdot\rangle:N_\QQ\times M_\QQ\rightarrow \QQ$ be the
corresponding duality that we also denote by
$\langle v,u\rangle=v(u)$. Let $\TT=\Spec\KK[M]$ be the algebraic
torus whose character lattice is $M$. The~torus $\TT$ is an algebraic
group isomorphic to $\GM^n$, where $\GM$ is the multiplicative group
of the base field~$\KK$.

Recall that a toric variety $X$ is a normal variety endowed with a
faithful regular action $\TT\times X\rightarrow X$ of the
algebraic torus $\TT$ having an open orbit. A fan $\Sigma\in N_\QQ$ is
a finite collection of strictly convex polyhedral cones such that
every face of $\sigma\in\Sigma$ is contained in $\Sigma$ and for all
$\sigma,\sigma'\in\Sigma$ the intersection $\sigma\cap\sigma'$ is a
face in both cones $\sigma$ and $\sigma'$. A toric variety
$X(\Sigma)$ is built from $\Sigma$ in the following way. For every
$\sigma\in\Sigma$, we define an affine toric variety
$X(\sigma)=\Spec \KK[\sigma^{\vee}\cap M]$, where
$\sigma^{\vee}\subseteq M_{\mathbb{Q}}$ is the dual cone of $\sigma$
and $\KK[\sigma^{\vee}\cap M]$ is the semigroup algebra of
$\sigma^{\vee}\cap M$, i.e.,
\[
\KK[\sigma^{\vee}\cap M]=\bigoplus_{u\in\sigma^{\vee}\cap
  M}\KK\cdot\chi^{u},\quad\mbox{with}\quad\chi^{0}=1,\mbox{ and
}\chi^{u}\cdot\chi^{u'}=\chi^{u+u'},\ \forall u,u'\in\sigma^{\vee}\cap
M\,.
\]
Furthermore, if $\tau\subseteq\sigma$ is a face of $\sigma$, then the
inclusion of algebras
$\KK[\sigma^{\vee}\cap M]\hookrightarrow \KK[\tau^{\vee}\cap M]$
induces a $\TT$-equivariant open embedding
$X(\tau)\hookrightarrow X(\sigma)$ of affine $\TT$-varieties.  The
toric variety $X(\Sigma)$ associated to the fan $\Sigma$ is then
defined as the variety obtained by gluing the family
$\{X(\sigma)\mid\sigma\in\Sigma\}$ along the open embeddings
$X(\sigma)\hookleftarrow X(\sigma\cap\sigma')\hookrightarrow
X(\sigma')$ for all $\sigma,\sigma'\in\Sigma$.

\smallskip

Following \cite{GoLa73}, an algebraic variety $X$ is called semiaffine
if the morphism $X\rightarrow \Spec \Gamma(X,\OO_X)$ induced by the
global sections functor is proper. If, moreover, this morphism is
projective, we say that $X$ is semiprojective. In both cases,
$\Gamma(X,\OO_X)$ is finitely generated and so $\Spec \Gamma(X,\OO_X)$
is an affine variety \cite[Corollary~3.6]{GoLa73}. For instance,
complete or affine varieties are semiaffine, while projective or
affine varieties are semiprojective. Furthermore, any blow-up of a
semiaffine (resp. semiprojective) variety is also semiaffine
(resp. semiprojective).

A toric variety $X(\Sigma)$ is semiprojective if and only if the fan
$\Sigma$ is the normal fan of a (non-necessarily bounded) polyhedron
in $M_\QQ$, see~\cite[Proposition~7.2.9]{CLS}. Furthermore,
$X(\Sigma)$ is semiaffine if and only if the support of the fan
$\supp(\Sigma)=\bigcup_{\sigma\in\Sigma}\sigma$ is a convex set,
see~\cite[Exercise~15.1.7]{CLS}.

\subsection{Affine $\TT$-varieties and p-divisors}

In the remaining of this section $\KK$ is an algebraically closed
field of characteristic zero. Let $\sigma$ be a pointed polyhedral
cone in $N_{\QQ}$. We define $\pol_{\sigma}(N_{\QQ})$ to be the set of
all polyhedra with tail cone $\sigma$. We also include the empty set
$\emptyset \in \pol_{\sigma}(N_{\QQ})$. The set $\pol_\sigma(N_\QQ)$
with Minkowski sum has the structure of abelian semigroup with
identity, where the addition rule for $\emptyset$ is defined as
$\emptyset+\Delta=\emptyset$ for all
$\Delta\in \pol_{\sigma}(N_{\QQ})$.

Let $Y$ be a normal projective variety. A polyhedral divisor on $Y$ is
a formal sum $\DDD=\sum_{Z}\DDD_Z\cdot Z$, where
$\DDD_Z\in\pol_{\sigma}(N_{\QQ})$, $Z\subseteq Y$ is a prime divisor
and $\DDD_Z=\sigma$ for all but finitely many $Z$. We say that
$\sigma$ is the tail cone of $\DDD$ denoted by $\tail\DDD$. For every
$y\in Y$ we define the slice at $y$ by $\DDD_y=\sum_{Z\ni y} \DDD_Z$.  The
locus of $\DDD$ is $\Loc\DDD=\{y\in Y\mid \DDD_y\neq \emptyset\}$. For
every $u\in\tail\DDD^\vee$ we can evaluate $\DDD$ in $u$ by letting
$\DDD(u)$ be the $\QQ$-divisor in $\Loc\DDD$ given by
$$\DDD(u)=\sum_{Z\subseteq \Loc(\DDD)} \min\langle\DDD_Z,u \rangle \cdot
Z\,,$$ where $Z\subseteq \Loc(\DDD)$ runs through all prime divisor in
$\Loc(\DDD)$.

Consider a rational Cartier divisor $D$ on a normal variety $Y$.
The divisor $D$ is semiample if it admits a basepoint-free multiple,
i.e. for some $n\in\ZZ_{>0}$ the sets  $Y_f:=Y\setminus\Supp(\did(f)+nD)$,
where $f\in\Gamma(Y,\OO(nD))$, cover $Y$. Further, $D$ is big if for some
$n\in\ZZ_{>0}$ there is a section $f\in\Gamma(Y,\OO(nD))$ with an affine
locus $Y_f$.

A polyhedral divisor $\DDD$ on $Y$ is called a p-divisor if $\Loc\DDD$
is semiprojective and
\begin{enumerate}[$(i)$]
\item for every $u\in\tail\DDD^\vee$ the evaluation $\DDD(u)$ is
  semiample; 
\item for every $u\in\relint(\tail\DDD^\vee)$ the evaluation $\DDD(u)$
  is big.
\end{enumerate}

Let us recall that the complexity of an effective algebraic torus action $\TT\times X\to X$ on an algebraic variety $X$ is the difference $\dim X-\dim\TT$. In particular, toric varieties are precisely normal algebraic varieties with a torus action of complexity zero.

\smallskip

The main classification result for affine $\TT$-varieties in \cite{AlHa06} is the following.

\begin{theorem} \label{AH} To any p-divisor $\DDD$ on a
normal projective variety $Y$ one can associate a normal affine
$\TT$-variety $X(\DDD)=\Spec A(\DDD)$ of dimension $\rank N+\dim Y$
and complexity $\dim Y$ given by
$$A(\DDD)=\bigoplus_{u\in\tail\DDD^\vee\cap M} A_u\chi^u,\quad
  \mbox{where}\quad A_u=\Gamma(\Loc\DDD,\OO(\DDD(u))\subseteq
  \KK(Y)\,.
$$
Conversely, any normal affine $\TT$-variety is equivariantly
isomorphic to $X(\DDD)$ for some p-divisor $\DDD$ on some normal
projective variety $Y$.
\end{theorem}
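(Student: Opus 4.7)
The plan is to treat the two directions separately, since the theorem combines a construction (from a p-divisor produce an affine $\TT$-variety with the stated dimension and complexity) with a converse classification (every normal affine $\TT$-variety arises this way). Both directions are essentially the content of \cite{AlHa06}, and I would follow the strategy laid out there.

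For the forward direction, given a p-divisor $\DDD$ on $Y$ I would first check that
$$A(\DDD)=\bigoplus_{u\in\tail\DDD^\vee\cap M}A_u\chi^u,\qquad A_u=\Gamma(\Loc\DDD,\OO(\DDD(u))),$$
is an $M$-graded $\KK$-subalgebra of $\KK(Y)(M)$. The key algebraic identity is the superadditivity $\DDD(u)+\DDD(u')\le\DDD(u+u')$ coming from $\min\langle\DDD_Z,u\rangle+\min\langle\DDD_Z,u'\rangle\le\min\langle\DDD_Z,u+u'\rangle$ for each prime divisor $Z$, which makes the multiplication $A_u\cdot A_{u'}\subseteq A_{u+u'}$ well defined. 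Normality of $A(\DDD)$ reduces to checking that homogeneous elements behave well under valuations: since $Y$ is normal and each $A_u$ is defined as global sections of a reflexive sheaf, the integral closure condition can be verified on each graded piece. Finite generation uses the semiampleness condition to produce enough global sections, together with standard Cox-ring / GIT-type arguments on the semiprojective locus $\Loc\DDD$; the bigness condition on the relative interior of $\tail\DDD^\vee$ ensures that the resulting $\Spec$ has the expected dimension rather than collapsing. The field of fractions is $\KK(Y)(M)$, which gives $\dim X(\DDD)=\dim Y+\rank N$; since $\KK(X(\DDD))^\TT=\KK(Y)$, the complexity is $\dim Y$ as claimed.

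For the converse, start with a normal affine $\TT$-variety $X=\Spec A$ and use the $M$-grading $A=\bigoplus_{u\in M}A_u$ induced by the torus action. The weight monoid $\omega=\{u\in M\mid A_u\ne 0\}$ generates a cone whose dual is the candidate tail cone $\sigma$. The homogeneous fractions field $\KK(X)^\TT\subseteq\KK(X)$ gives a birational extension of function fields whose transcendence degree is the complexity; picking a normal projective model $Y$ with $\KK(Y)=\KK(X)^\TT$ (blowing up if necessary), each nonzero $A_u$ sits inside $\KK(Y)$, and normality of $A$ forces $A_u=\Gamma(Y,\OO(D_u))$ for a well-defined $\QQ$-Weil divisor $D_u$. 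The content of Altmann--Hausen is that the assignment $u\mapsto D_u$ is piecewise linear and superadditive in a way that integrates into a single piecewise linear map from $\sigma^\vee$ to the $\QQ$-divisors on $Y$, and such a map is equivalent data to a polyhedral divisor $\DDD$ with $\DDD(u)=D_u$. The p-divisor axioms then follow by backward translation: semiampleness of $\DDD(u)$ corresponds to global generation of $A_u$ away from the exceptional locus, and bigness on the interior reflects the fact that $\Spec A$ has full dimension $\rank N+\dim Y$.

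The main obstacle is the converse direction, and within it the construction of the correct projective base $Y$ together with the piecewise linearity of $u\mapsto D_u$. One must show that the divisorial valuations of $\KK(X)$ extending geometric valuations on a projective model assemble into finitely many "vertical" components whose coefficients depend polyhedrally on $u\in\omega$; this is where one needs a Chow-quotient-type construction of $Y$ and a careful finiteness argument to guarantee that only finitely many $Z\subseteq Y$ contribute a nontrivial $\DDD_Z\ne\tail\DDD$. Once this combinatorial regularity is established, the verification of the p-divisor conditions and of the equivariant isomorphism $X\cong X(\DDD)$ is essentially formal, reducing to the forward direction applied to the reconstructed data.
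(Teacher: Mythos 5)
This statement is Theorem~\ref{AH}, which the paper does not prove: it is imported verbatim from \cite{AlHa06} and used as a black box, so there is no internal proof to compare against. Your outline is a faithful summary of the actual Altmann--Hausen argument --- superadditivity of $u\mapsto\DDD(u)$ for the ring structure, semiampleness for finite generation, bigness for the dimension count in the forward direction, and the construction of $Y$ together with the piecewise-linear, superadditive family $u\mapsto D_u$ in the converse --- and you correctly identify the construction of the base $Y$ and the finiteness of the nontrivial coefficients $\DDD_Z$ as the genuine difficulties. As written it remains a plan rather than a proof (normality, finite generation, and the saturation statement $A_u=\Gamma(\OO(D_u))$ are asserted at a high level), but no step is wrong, and in the context of this paper the appropriate ``proof'' is simply the citation of \cite{AlHa06}.
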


\subsection{Arbitrary $\TT$-varieties and divisorial fans}
\label{sec:T-complete}

We recall the main facts of the description of not necessarily affine
$\TT$-varieties given in \cite{AHS08} in terms of divisorial fans. To
describe non-affine $\TT$-varieties, we need to describe first
$\TT$-invariant open embeddings.

Let $\Delta$ be a $\sigma$-polyhedron in $N_\QQ$ and let
$e\in \sigma^\vee$. The face of $\Delta$ defined by $e$ is the set
$$\face(\Delta,e)=\left\{p\in \Delta\mid
  p(e)=\min\langle\Delta,e \rangle\right\}\,.$$

Let now $\DDD=\sum_Z \DDD_Z\cdot Z$ and $\DDD'=\sum_Z \DDD'_Z\cdot Z$
be two p-divisors on $Y$. We define the support of $\DDD$ as the union
of divisors $Z\in Y$ such that $\DDD_Z\neq \emptyset$ or
$\DDD_Z\neq \tail(\DDD)$. We say that $\DDD'$ is a face of $\DDD$ if
$\DDD_Z'\subseteq \DDD_Z$ holds for all prime divisors $Z\subseteq Y$,
and for any $y\in\Loc(\DDD')$ there are $e_y\in \sigma^\vee\cap M$ and
$D_y$ in the linear system $|\DDD(e_y)|$ such that
\begin{enumerate}[$(i)$]
\item $y\notin \supp(D_y)$;
\item $\DDD'_y=\face(\DDD_y,e_y)$;
\item $\face(\DDD'_y,e_y)=\face(\DDD_{\tilde{y}},e_y)$ for every $\tilde{y}\in
Y\setminus \supp(D_y)$.
\end{enumerate}

Let $\DDD'$ be a face of $\DDD$. Then $A(\DDD)\subseteq A(\DDD')$ and
this inclusion is the comorphism of a $\TT$-equivariant open embedding
$X(\DDD')\hookrightarrow X(\DDD)$ of affine $\TT$-varieties
\cite[Proposition~3.4 and Definition~5.1]{AHS08}. We define the
intersection of two p-divisors as the polyhedral divisor
$$\DDD\cap\DDD'=\sum_{Z\subseteq Y}(\DDD_Z\cap\DDD_Z') \cdot Z\,.$$

Let $Y$ be a normal projective variety. A divisorial fan $\SF$ on $Y$
is a finite collection of p-divisors such that for every two
$\DDD,\DDD'\in \SF$ the intersection $\DDD\cap \DDD'$ is a face of
each and belongs to $\SF$. The set of tail cones
$\tail\DDD$ over all p-divisors $\DDD\in\SF$ is a fan called the tail
fan $\tail\SF$ of the divisorial fan $\SF$.

For every divisorial fan $\SF$ on $Y$ we
can define a $\TT$-scheme $X(\SF)$ by gluing the family of affine
$\TT$-varieties $\{X(\DDD)\mid \DDD\in \SF\}$ along the
$\TT$-invariant affine open sets
$\{X(\DDD\cap \DDD')\mid \DDD,\DDD'\in \SF\}$. The main result in
\cite{AHS08} is the following.

\begin{theorem}
Every normal $\TT$-variety is equivariantly isomorphic to $X(\SF)$
for some divisorial fan $\SF$ on a normal projective variety $Y$.
\end{theorem}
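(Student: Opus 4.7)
The plan is to reduce the statement to the affine classification in Theorem~\ref{AH} by covering $X$ with $\TT$-invariant affine open subsets, constructing a p-divisor on each, and then assembling these into a divisorial fan on a common base $Y$.

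First I would invoke Sumihiro's equivariant completion/local triviality theorem to cover the given normal $\TT$-variety $X$ by a finite family of $\TT$-invariant affine open subsets $\{U_i\}_{i\in I}$. Applying Theorem~\ref{AH} to each $U_i$ produces a p-divisor $\DDD_i$ on some normal projective variety $Y_i$ with $U_i\cong X(\DDD_i)$ as a $\TT$-variety. Since any two affine $\TT$-opens $U_i, U_j$ meet in a $\TT$-invariant open subset which, by separatedness of $X$, is affine (after possibly refining the cover, e.g.\ by further intersecting with affine $\TT$-opens of $U_i$ and $U_j$, one can ensure each pairwise intersection $U_i\cap U_j$ is itself $\TT$-affine), one also obtains p-divisors $\DDD_{ij}$ realizing $U_i\cap U_j$.

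Next I would construct a single base variety $Y$ on which all the $\DDD_i$ live simultaneously. The field of $\TT$-invariant rational functions $\KK(X)^{\TT}$ is a birational invariant of the $\TT$-action, and each $Y_i$ is a normal projective model of $\KK(X)^{\TT}$. By taking an equivariant resolution of the rational quotient map, or equivalently a normal projective variety $Y$ with dominant birational morphisms $\pi_i\colon Y\to Y_i$ (via Chow's lemma and normalization), I would pull each $\DDD_i$ back along $\pi_i$ to obtain a p-divisor $\pi_i^{*}\DDD_i$ on the common base $Y$. The point here is that the affine $\TT$-variety $X(\DDD)$ depends only on the birational class of the pair $(Y,\DDD)$, so that $X(\pi_i^{*}\DDD_i)\cong X(\DDD_i)\cong U_i$; this invariance under pullback along birational contractions is precisely the content of the functoriality part of \cite{AlHa06} (the p-divisor and its contraction yield the same spectrum of global sections graded by $\tail\DDD^\vee\cap M$).

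Once all $\DDD_i$ live on the same $Y$, I would verify the divisorial-fan axioms. For each pair $i,j$, the open immersion $U_i\cap U_j\hookrightarrow U_i$ is $\TT$-equivariant and affine; by the open-immersion criterion \cite[Proposition~3.4]{AHS08} recalled in the excerpt, this forces $\pi_i^{*}\DDD_i\cap \pi_j^{*}\DDD_j$ to be a face of both $\pi_i^{*}\DDD_i$ and $\pi_j^{*}\DDD_j$ and to coincide with the p-divisor representing $U_i\cap U_j$. Setting $\SF=\{\pi_i^{*}\DDD_i\}\cup\{\text{all pairwise intersections}\}$ (closed under intersections and containing faces of its members), I would check that $\SF$ is a divisorial fan and that the gluing of $X(\DDD)$ for $\DDD\in\SF$ along the face inclusions recovers the gluing of the $U_i$ in $X$. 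This yields $X\cong X(\SF)$.

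The hard part will be Step~3: producing a single normal projective base $Y$ on which every $\DDD_i$ is defined while preserving both the p-divisor property (semiampleness of $\DDD(u)$ and bigness on the relative interior) and the face relations after pullback. Controlling how the coefficients $(\DDD_i)_Z$ behave under the exceptional divisors of $\pi_i$, and showing that pullback respects the intersection operation $\DDD\cap\DDD'$ that appears in the definition of a divisorial fan, is the technically delicate step that occupies the bulk of \cite{AHS08}.
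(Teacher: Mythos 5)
The paper does not prove this statement: it is quoted verbatim as ``the main result in \cite{AHS08}'' and used as a black box, so there is no in-paper argument to compare yours against. Your outline is, in broad strokes, the actual strategy of \cite{AHS08}: cover $X$ by $\TT$-invariant affine opens via Sumihiro, apply Theorem~\ref{AH} to each piece, move all the resulting p-divisors to a common normal projective model $Y$ of $\KK(X)^{\TT}$ by pulling back along birational morphisms (using the functoriality/invariance of $X(\DDD)$ from \cite{AlHa06}), and then encode the gluing data through the face relation and the open-immersion criterion of \cite[Proposition~3.4]{AHS08}. So the approach is the right one and matches the source the paper relies on.

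That said, as a proof the proposal is only a plan: the step you yourself flag as hard is exactly where the substance lies, and it is not carried out. Two points in particular need more than an appeal to birational invariance. First, after pulling back to a common $Y$ you must arrange that the p-divisor representing $U_i\cap U_j$ is literally the coefficientwise intersection $\pi_i^{*}\DDD_i\cap\pi_j^{*}\DDD_j$ and that this polyhedral divisor is again a p-divisor satisfying the face conditions $(i)$--$(iii)$ (existence of the sections $e_y$ and divisors $D_y$); the p-divisor produced by Theorem~\ref{AH} for a given affine chart is not unique, and choosing the representatives compatibly across charts is a genuine normalization argument in \cite{AHS08}, not an automatic consequence of $X(\DDD)$ being a birational invariant of $(Y,\DDD)$. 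Second, closing $\SF$ under intersection and verifying that the resulting gluing reproduces the separated variety $X$ (rather than some non-separated prevariety) uses the equivariant valuative criteria mentioned in Section~\ref{sec:T-complete}. None of this invalidates your outline, but within the context of this paper the correct ``proof'' is simply the citation of \cite{AHS08}, and your sketch should be read as a summary of that reference rather than an independent argument.
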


Furthermore, in \cite[Section~7]{AHS08}, conditions for $X(\SF)$ to be
separated and/or complete are given by proving a $\TT$-equivariant
version of the valuative criterion for separatedness and properness. A
straightforward application of these results provides a criterion for
a morphism of $\TT$-varieties to be proper.

For the following section, we need a technical lemma that we introduce
below. Let $\SF$ be a divisorial fan on a normal projective variety
$Y$ and $\DDD$ be a p-divisor in $\SF$. The support $\supp\SF$ is
defined as the union of the supports of all $\DDD\in\SF$. We define
the following p-divisors with affine locus.
\begin{enumerate}[$(i)$]
\item For every pair $(Z,v)$, where $Z\subseteq Y$ is a prime divisor
  and $v\in \DDD_Z$ is a vertex such that $\OO(\DDD(u))|_Z$ is big for
  $u\in \relint\left(\cone(\DDD_Z-v)^\vee\right)$, we let
  $\DDD_{Z,v}=v\cdot Z +\emptyset\cdot (\supp\SF\setminus
  Z)+\emptyset\cdot (Y\setminus Y_Z)$, where $Y_Z$ is an affine open
  set of $Y\setminus (\supp\SF\setminus Z)$ such that
  $Z\cap Y_Z\neq \emptyset$.
\item For every ray $\rho\in\tail(\DDD)$ such that $\DDD(u)$ is big
  for $u\in \relint(\rho^\bot\cap \sigma^\vee)$, we let
  $\DDD_{\rho}=\emptyset\cdot \supp(\SF)+\emptyset\cdot (Y\setminus
  Y_0)$ with tail cone $\tail\DDD_{\rho}=\rho$, where $Y_0$ is an
  affine open set of $Y\setminus \supp\SF$.
\end{enumerate}

Let $\mathcal{C}$ be the union of all the p-divisors $\DDD_{Z,v}$ and $\DDD_\rho$
defined above for all $\DDD\in\SF$. By \cite[Proposition~3.13]{PeSu11}, these p-divisors are
in bijective correspondence with $\TT$-invariant prime Weil divisors in
$X(\SF)$. It is easy to deduce from the proof that every
$\DDD'\in \mathcal{C}$ is a face of some $\DDD\in\SF$.
In particular, we have $\TT$-equivariant open embeddings
$X(\DDD')\hookrightarrow X(\DDD)\hookrightarrow X(\SF)$.

\begin{lemma}\label{T-codim2}
The union $U$ of the family of open sets
$\{X(\DDD')\subseteq X(\SF)\mid \DDD'\in \mathcal{C}\}$ is a big
open set in $X(\SF)$, i.e., $X(\SF)\setminus U$ has codimension at
least two in $X(\SF)$.
\end{lemma}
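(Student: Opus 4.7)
The plan is to show that the closed complement $X(\SF)\setminus U$ has no irreducible component of codimension one, which boils down to showing that every $\TT$-invariant prime divisor of $X(\SF)$ meets $U$.

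First I would observe that $U=\bigcup_{\DDD'\in\mathcal{C}} X(\DDD')$ is a union of $\TT$-stable affine open subvarieties and is therefore itself $\TT$-stable. Consequently $X(\SF)\setminus U$ is a $\TT$-stable closed subset, and since $\TT$ is connected and irreducible, each irreducible component of this closed subset is itself $\TT$-invariant. In particular, if $X(\SF)\setminus U$ had an irreducible component of codimension one, that component would be a $\TT$-invariant prime divisor of $X(\SF)$. It therefore suffices to show that no $\TT$-invariant prime divisor is contained in $X(\SF)\setminus U$.

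To handle the $\TT$-invariant prime divisors I would invoke the correspondence recalled just before the lemma: by \cite[Proposition~3.13]{PeSu11} the set $\mathcal{C}$ is in bijection with the set of $\TT$-invariant prime Weil divisors of $X(\SF)$. Let $D_{\DDD'}$ denote the prime divisor attached to $\DDD'\in\mathcal{C}$. The key step is then to verify that $D_{\DDD'}\cap X(\DDD')$ is nonempty, for then $D_{\DDD'}$ meets the chart $X(\DDD')\subseteq U$ and is not contained in $X(\SF)\setminus U$. I would check this by tracing through the Petersen--S\"u\ss\ construction on the chart $X(\DDD')$: for a vertical element $\DDD_{Z,v}\in\mathcal{C}$, the associated divisor is the one cut out in $X(\DDD_{Z,v})$ by the vertex $v$ of the slice $\DDD_Z$, while for a horizontal element $\DDD_\rho\in\mathcal{C}$ it is the divisor cut out in $X(\DDD_\rho)$ by the ray $\rho\in\tail(\DDD_\rho)$. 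The bigness hypotheses built into the definitions of $\DDD_{Z,v}$ and $\DDD_\rho$ are precisely what guarantees that the relevant evaluations are nonzero, and hence that these divisors are genuinely present on their designated charts.

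The main obstacle will be this last local verification. The bijection cited from \cite{PeSu11} identifies the $\TT$-invariant prime divisors of $X(\SF)$ with the elements of $\mathcal{C}$, but it still requires careful unpacking to confirm that each prime divisor is supported on the affine chart $X(\DDD')$ assigned to it, rather than only on some other chart of $\SF$. The authors note that this ``is easy to deduce from the proof'' of the Petersen--S\"u\ss\ result, and indeed once that local description is in hand the lemma follows immediately from the equivariance reduction above; a fully self-contained write-up, however, has to recall the explicit description of $\TT$-invariant prime Weil divisors of an affine $\TT$-variety in terms of vertices of polyhedral slices over prime divisors of $Y$ and rays of tail cones.
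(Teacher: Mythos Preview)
Your proposal is correct and follows essentially the same approach as the paper: reduce to showing that every $\TT$-invariant prime divisor meets $U$, then invoke the Petersen--S\"u\ss\ bijection between $\mathcal{C}$ and $\TT$-invariant prime divisors. The only minor difference is that the paper begins with a local reduction to a single p-divisor $\SF=\DDD$, which streamlines the verification that each divisor meets its designated chart; your version works globally but otherwise proceeds identically, and your discussion of the ``main obstacle'' accurately pinpoints the step the paper glosses over.
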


\begin{proof}
This is a local statement, so we can assume $\SF=\DDD$.  By
\cite[Proposition~3.13]{PeSu11}, the p-divisors on
$\DDD'\in\mathcal{C}$ are in bijection with prime $\TT$-invariant
divisors on $X(\DDD)$ and the union $U$
of $\TT$-invariant open sets $X(\DDD')$ with $\DDD'\in\mathcal{C}$
intersects all $\TT$-invariants divisors. This yields that the
complement of the $\TT$-invariant open set $U$ contains no prime
Weil divisor and so $X(\DDD)\setminus U$ has codimension at least two.
\end{proof}

\section{Vertical additive group actions on normal
  $\TT$-varieties} \label{s2}

In this section we provide a classification of additive group actions
on semiaffine $\TT$-varieties. Such a~classification was known
before in the particular case of affine $\TT$-varieties. 

\subsection{Preliminaries on additive group actions}

Let $\GA$ be the additive group of the base field $\KK$ of
characteristic zero but not necessarily algebraically closed. Regular
additive group actions on an affine variety $X$ are classified by
certain derivations on its ring of regular functions $\KK[X]$. We
briefly introduce this well known correspondence in this section. For
more details, see \cite{F}.

A derivation of a $\KK$-algebra $A$ is a linear map
$\partiala:A\rightarrow A$ satisfying
the Leibniz rule, i.e., $\partiala(fg)=f\partiala(g)+g\partiala(f)$ for
every $f,g\in A$. For an affine variety $X$, a derivation of $X$ is
just a derivation of its structure ring $\KK[X]$.

A derivation
$\partiala:\KK[X]\rightarrow \KK[X]$ on an affine variety $X$ is called
a locally nilpotent derivation (LND) if for every $f\in \KK[X]$ there
exists $i\in\ZZ_{> 0}$ such that $\partiala^i(f)=0$, where
$\partiala^i$ denote the $i$-th composition of $\partiala$ with itself.

The main classifying result for additive group actions on affine
varieties is the following.

\begin{theorem}
  Let $X$ be an affine variety and $\partiala:\KK[X]\rightarrow
  \KK[X]$ be a derivation on $X$. If $\partiala$ is locally nilpotent
  then the comorphism $\alpha:\GA\times X \rightarrow X$ of the
  exponential map
  $$\KK[X]\rightarrow \KK[t]\otimes_\KK\KK[X]\quad\mbox{given by}\quad
  f\mapsto \sum_i\frac{t^i\partiala^i(f)}{i!}$$
  defines a $\GA$-action on $X$. Conversely, every $\GA$-action on $X$
  arises in this way.
\end{theorem}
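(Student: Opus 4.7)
The plan is to establish the two directions of the correspondence separately, treating the exponential construction and its inverse.

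For the forward direction, I would begin with a locally nilpotent derivation $\partiala$ and verify that the formula $\varphi(f):=\sum_{i\ge 0}t^i\partiala^i(f)/i!$ defines a $\KK$-algebra homomorphism $\KK[X]\to\KK[t]\otimes_\KK\KK[X]$. Local nilpotence ensures that for each $f$ only finitely many terms are nonzero, so $\varphi(f)$ lies in $\KK[t]\otimes\KK[X]$. That $\varphi$ respects multiplication follows from the iterated Leibniz identity $\partiala^n(fg)=\sum_{i+j=n}\binom{n}{i}\partiala^i(f)\partiala^j(g)$, proved by a short induction; dividing by $n!$ and summing over $n$ gives $\varphi(fg)=\varphi(f)\varphi(g)$.

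Next, I would verify that $\varphi$ is a coaction of the Hopf algebra $\KK[\GA]=\KK[t]$, where the comultiplication is $t\mapsto t\otimes 1+1\otimes t$ and the counit is $t\mapsto 0$. The counit axiom is the evaluation $\varphi(f)|_{t=0}=f$, which is immediate. Coassociativity amounts to the identity $\varphi_{s+t}=\varphi_s\circ\varphi_t$, where $\varphi_t$ denotes the specialization at a scalar value; this follows from the binomial expansion $(s+t)^n/n!=\sum_{i+j=n}(s^i/i!)(t^j/j!)$ together with the algebra homomorphism property already established. Hence $\varphi$ is the comorphism of a regular action $\alpha:\GA\times X\to X$.

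For the converse, given a regular action $\alpha$, write $\alpha^*(f)=\sum_{i\ge 0}t^if_i$ with $f_i\in\KK[X]$ and set $\partiala(f):=f_1$, which recovers the infinitesimal generator defined in the paper. The Leibniz rule for $\partiala$ drops out by comparing the coefficient of $t$ in the equality $\alpha^*(fg)=\alpha^*(f)\alpha^*(g)$. The delicate point is showing that $\partiala$ is locally nilpotent. I would invoke the standard local finiteness of algebraic group actions on affine varieties: for each $f\in\KK[X]$, the $\KK$-linear span $V_f$ of the coefficients $f_0,f_1,f_2,\ldots$ is finite-dimensional and $\GA$-stable. The action of $\GA$ on $V_f$ is a rational representation of $\GA$; since $\GA$ is unipotent, the image is conjugate to a subgroup of the upper unitriangular matrices, and its infinitesimal generator — which is precisely $\partiala|_{V_f}$ — is therefore nilpotent. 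Consequently some power of $\partiala$ kills $f$.

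The main obstacle is the nilpotence step in the converse; everything else reduces to Hopf-algebraic bookkeeping. To close the loop I would check that the two constructions are mutually inverse: differentiating the exponential $\varphi$ at $t=0$ clearly returns $\partiala$, and the Taylor coefficients of $\alpha^*(f)$ are forced to be $\partiala^i(f)/i!$ by the coassociativity condition $(\mathrm{id}\otimes\alpha^*)\circ\alpha^*=(\Delta\otimes\mathrm{id})\circ\alpha^*$ applied to the leading term, so the exponential of the derivation extracted from $\alpha$ recovers $\alpha^*$.
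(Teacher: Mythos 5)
The paper does not prove this theorem: it is quoted as the classical correspondence between $\GA$-actions and locally nilpotent derivations, with a pointer to Freudenburg's book \cite{F}, so there is no in-paper argument to compare against. Your proof is correct and is the standard one: the forward direction via the Hopf-algebra coaction axioms for the exponential map (finiteness of the sum from local nilpotence, multiplicativity from the iterated Leibniz rule, counit and coassociativity from evaluation at $t=0$ and the binomial identity), and the converse via local finiteness of the coaction plus unipotence of rational $\GA$-representations to get nilpotence of $\partiala$ on each finite-dimensional invariant subspace $V_f$; the closing verification that the Taylor coefficients of $\alpha^*(f)$ must equal $\partiala^i(f)/i!$ is exactly the coassociativity bookkeeping one finds in the references. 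No gaps.
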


Let now $X$ be an arbitrary variety and $\mathcal{F}$ be a sheaf of
$\KK$-algebras on $X$. A derivation of $\mathcal{F}$ is a map
$\partiala:\mathcal{F}\rightarrow\mathcal{F}$ such that that
$\Gamma(U,\partiala):\Gamma(U,\mathcal{F})\rightarrow
\Gamma(U,\mathcal{F})$ is a derivation for every open affine set
$U\subseteq X$. A derivation $\partiala:\OO_X\rightarrow\OO_X$ of the
structure sheaf of $X$ is simply called a derivation on $X$. If $X$ is
affine, it is easy to check that every derivation
$\partiala:\KK[X]\to\KK[X]$ defines a derivation of the structure
sheaf $\OO_X$.  By a well known construction coming from differential
geometry, derivations on $X$ correspond to vector fields on $X$. The
set of all derivations $D:\OO_X\rightarrow \OO_X$ is denoted by
$\Lie(X)$ and has a natural structure of Lie algebra with Lie bracket
given by the commutator $[D,D']=D\circ D'-D'\circ D$.

Let now $X$ be a variety and $\alpha:\GA\times X\rightarrow X$ be
a $\GA$-action on $X$. In this case we obtain a derivation
$\partiala:\OO_X\rightarrow \OO_X$ of the structure sheaf of $X$ from
the $\GA$-action in the following way. Let $U\subseteq X$ be an affine
open set. Hence $\alpha^{-1}(U)$ is an open set containing
$\{0\}\times U\subset \GA\times X$. We define $\partiala$ via
$$\Gamma(U,\partiala):\Gamma(U,\OO_X)\rightarrow\Gamma(U,\OO_X),\qquad
f\mapsto \left[\frac{d}{dt}(\alpha^*f)\right]_{t=0}\,.$$ By abuse of
notation, when the affine open set $U$ is clear from the context we
will denote $\Gamma(U,\partiala)$ simply by $\partiala$. The
derivation $\partiala$ is called the infinitesimal generator of the
$\GA$-action.

The following result taken from \cite[Corollary~2.2]{DuLi16} provides
a classification of $\GA$-actions on semiaffine varieties in terms of
derivations of the structure sheaf generalizing the one for affine
case.

\begin{theorem} \label{open-LND} %
  Let $X$ be a semiaffine variety and
  $\partiala:\mathcal{O}_{X}\rightarrow\mathcal{O}_{X}$ be a derivation
  of the structure sheaf. Then $\partiala$ defines a regular
  $\GA$-action $\alpha$ on $X$ if and only if there exists a nonempty
  affine open set $U\subseteq X$ such that
  $\Gamma(U,\partiala):\Gamma(U,\mathcal{O}_{X})
  \rightarrow\Gamma(U,\mathcal{O}_{X})$ is locally nilpotent.
\end{theorem}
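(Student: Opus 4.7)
The plan is to bridge the sheaf-theoretic statement to the classical equivalence between $\GA$-actions and locally nilpotent derivations on affine varieties, using the affinization morphism $\pi: X \to Y := \Spec A$ with $A = \Gamma(X, \mathcal{O}_X)$ finitely generated by the semiaffine hypothesis; this morphism is proper and becomes $\GA$-equivariant as soon as a $\GA$-action is present on either end.

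For the forward direction, assume $\partiala$ generates a regular $\GA$-action $\alpha$ on $X$. The coaction on global sections $\alpha^\ast: A \to \KK[t] \otimes_\KK A$ is rational, so $A$ is a locally finite $\GA$-module; the Taylor expansion $\alpha^\ast f = \sum_{i \ge 0} t^i \partiala^i(f)/i!$ then forces $\partiala|_A$ to be locally nilpotent. The key step is to exhibit a $\GA$-invariant affine open $U \subseteq X$, available via a Sumihiro-type argument for $\GA$-actions on normal semiaffine varieties. On any such $U$, the identity $\alpha^{-1}(U) = \GA \times U$ ensures that $\alpha^\ast$ lands in $\KK[t] \otimes_{\KK} \Gamma(U, \mathcal{O}_X)$, and the termination of the resulting Taylor expansion yields local nilpotence of $\Gamma(U, \partiala)$.

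For the converse, given $U$ affine with $\Gamma(U, \partiala)$ locally nilpotent, the classical theorem supplies a regular morphism $\alpha_U: \GA \times U \to U$. Because $A$ is a $\partiala$-stable subring of $\Gamma(U, \mathcal{O}_X)$, the restriction $\partiala|_A$ is also locally nilpotent and defines a $\GA$-action $\alpha_Y: \GA \times Y \to Y$; the composition $g := \alpha_Y \circ (\mathrm{id}_{\GA} \times \pi): \GA \times X \to Y$ is then an everywhere-defined morphism. The rational extension $\tilde\alpha: \GA \times X \dashrightarrow X$ of $\alpha_U$ satisfies $\pi \circ \tilde\alpha = g$ on its domain, so the relative valuative criterion of properness applied to the proper morphism $\pi$, together with normality of $\GA \times X$, extends $\tilde\alpha$ across every codimension-one point of $\GA \times X$. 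The remaining indeterminacies, of codimension at least two, are eliminated using the group-law identity $\alpha(t_1+t_2, x) = \alpha(t_1, \alpha(t_2, x))$: for any bad point $(t,x)$, a generic splitting $t = t_1+t_2$ places both $(t_2, x)$ and $(t_1, \tilde\alpha(t_2, x))$ inside the already-defined locus, and independence of the splitting follows from agreement on the dense open $\GA \times U$ together with separatedness of $X$.

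The principal technical obstacle is the codimension-two cleanup in the converse: the valuative criterion alone handles only codimension one, so the group-law trick is essential, and it requires a dimension count ensuring that no entire $\GA$-orbit can lie inside the bad locus of $\GA \times X$. Once $\tilde\alpha$ is everywhere defined, the $\GA$-action axioms hold on the dense open $\GA \times U$ and extend to all of $\GA \times X$ by separatedness, and $\partiala$ is identified as the infinitesimal generator of the resulting action by direct verification on $U$.
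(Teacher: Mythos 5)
The paper does not actually prove this statement---it is imported verbatim from \cite[Corollary~2.2]{DuLi16}---so there is no internal proof to measure you against. Your forward direction is essentially the standard argument and is fine (though the invariant affine open comes from Rosenlicht's cross-section theorem plus triviality of $\GA$-torsors over affine bases, which is exactly what the paper rehearses in Proposition~\ref{T-open-LND}, rather than from a ``Sumihiro-type'' theorem, which concerns linearization on normal varieties).

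The converse, however, has a genuine gap at the codimension-two cleanup, and the gap is fatal as written because your argument never uses the hypothesis that $\partiala$ is regular on \emph{all} of $X$ beyond its action on $U$ and on $\Gamma(X,\OO_X)$. Concretely, let $X$ be the blow-up of $\AA^2$ at the origin, $\pi\colon X\to\AA^2$ the (proper) blow-down, and let $\partiala$ be the pullback of $\partial/\partial y$ as a rational vector field. Then $X$ is semiaffine, $\partiala$ is a locally nilpotent derivation on the affine open $U=\{x\neq 0\}$, the ring $A=\Gamma(X,\OO_X)=\KK[x,y]$ is $\partiala$-stable with $\partiala|_A$ locally nilpotent, and $\GA\times X$ is normal---every input of your converse is satisfied---yet $\partiala$ has a pole along the exceptional divisor $E$ and there is no regular $\GA$-action on $X$. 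Tracing your argument in this example: the valuative criterion does extend $\tilde\alpha$ outside a codimension-two set $B$, and $B$ meets each slice $\GA\times\{p\}$ in at most one point, so the ``dimension count'' you call for is even satisfied; nevertheless the group-law trick fails, because for $p\in E$ the rational map $\tilde\alpha_{t_2}$ contracts $E$ to the single point $q_{t_2}$ of $X$ lying over $(0,t_2)$, and the pair $(-t_2,q_{t_2})$ lies in $B$ for \emph{every} $t_2\neq 0$. Thus the assertion that a generic splitting $t=t_1+t_2$ places $(t_1,\tilde\alpha(t_2,x))$ in the already-defined locus is false: at the stage where $\tilde\alpha_{t_2}$ is only a rational map it can contract a positive-dimensional set into the bad locus, and the bad pairs propagate. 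Any correct proof must feed the everywhere-regularity of $\partiala$ into the analysis of the codimension-$\geq 2$ indeterminacy locus (this is where the work in \cite{DuLi16} lies); since your argument does not, it would prove a false statement and cannot be repaired by a dimension count alone.
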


A derivation $\partiala:\OO_X\rightarrow\OO_X$ corresponding to a
$\GA$-action on $X$ is called a \emph{regularly integrable derivation}.

Any derivation $\partiala:\OO_X\rightarrow \OO_X$ on a normal variety
$X$ can be extended to a derivation
$\partiala:\KK(X)\rightarrow \KK(X)$ by the Leibniz rule. Indeed,
taking any affine open set $U\subseteq X$, we have $\KK(X)=\KK(U)$ and
for $f,g\in \KK[U]$ we define
$$\partiala\left(\frac{f}{g}\right)=
\partiala\left(f\cdot\frac{1}{g}\right)=
\partiala(f)\cdot\frac{1}{g}-\frac{f}{g^2}\cdot \partiala(g)=
\frac{g\partiala(f)-f\partiala(g)}{g^2}\,.$$ %
In particular, $\partiala:\OO_X\rightarrow\OO_X$ is completely
determined by its action on any affine open set $U\subseteq X$ or by
its action on rational functions $\KK(X)$. Furthermore, a derivation
$\partiala:\KK(X)\rightarrow\KK(X)$ of the field of rational functions
of $X$ is called regular if it defines a derivation of $\OO_X$, i.e.,
if for every affine open set $U\subseteq X$ we have
$\partiala(\KK[U])\subseteq \KK[U]$.

\subsection{Compatible $\GA$-actions on $\TT$-varieties}
Let $M$ be a lattice of finite rank and $X$ be a $\TT$-variety, where
$\TT=\Spec\KK[M]$. Let also $\partiala:\OO_X\rightarrow \OO_X$ be a
derivation on $X$. We say that $\partiala$ is \emph{homogeneous} with
respect to $M$ if $\Gamma(U,\partiala)$ is homogeneous as a linear map
for every $\TT$-invariant open set $U\subseteq X$, i.e., if
$\Gamma(U,\partiala)$ sends homogeneous elements to homogeneous
elements. If $\partiala$ is nonzero, we define the~degree of
$\partiala$ by $\deg\partiala=\deg\partiala(g)-\deg g$ for any homogeneous
$g\notin \ker\partiala$ in any $\TT$-invariant affine open set $U$.

\begin{definition}\label{def-hom-ga}
Let $\TT\times X\rightarrow X$ be a $\TT$-action on $X$. In
analogy with the notions of root and root subgroup of an algebraic
group, we say that an action
$\alpha:\GA\times X\rightarrow X$ on $X$ is \emph{compatible} if the~normalizer 
of the image of $\GA$ in $\Aut(X)$ contains the image of $\TT$. If
$\partiala$ is the regularly integrable derivation corresponding to
$\alpha$, then $\alpha$ is compatible if and only if there exists a
character $\chi$ of $\TT$ such that for every $\TT$-invariant affine
open set $U\subseteq X$ we have
$$\gamma\circ\partiala\circ\gamma^{-1}=\chi(\gamma)\cdot\partiala,\quad\mbox{for
all}\quad \gamma\in\TT\,.$$
\end{definition}

In the next lemma we characterize regularly integrable vector
fields that give rise to compatible $\GA$-actions.

\begin{lemma}\label{root-hom}
  A $\GA$-action $\alpha$ is compatible if and only if the
  corresponding regularly integrable derivation
  $\partiala:\OO_X\rightarrow\OO_X$ is homogeneous.
\end{lemma}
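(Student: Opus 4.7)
The plan is to reduce to a local computation on a $\TT$-invariant affine open cover of $X$ and then argue by a weight-space decomposition. By Definition~\ref{def-hom-ga}, compatibility of $\alpha$ is equivalent to the existence of a character $\chi\in M$ of $\TT$ such that $\gamma\circ\partiala\circ\gamma^{-1}=\chi(\gamma)\partiala$ for all $\gamma\in\TT$ on every $\TT$-invariant affine open set $U\subseteq X$. Since both this identity and the homogeneity condition can be tested on any $\TT$-invariant affine open cover of $X$, it suffices to work with one such $U$ having weight-graded coordinate ring $A=\Gamma(U,\OO_X)=\bigoplus_{u\in M} A_u$.

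For the direction ``homogeneous $\Rightarrow$ compatible'', I would assume $\partiala(A_u)\subseteq A_{u+e}$ for some fixed $e\in M$ and every $u\in M$. For a homogeneous $f\in A_u$ and $\gamma\in\TT$, using $\gamma^{-1}\cdot f=\chi^{-u}(\gamma)f$ together with $\partiala(f)\in A_{u+e}$, a direct computation yields
$$\gamma\circ\partiala\circ\gamma^{-1}(f)=\chi^{-u}(\gamma)\,\chi^{u+e}(\gamma)\,\partiala(f)=\chi^e(\gamma)\,\partiala(f).$$
Extending $\KK$-linearly gives the desired identity with $\chi=\chi^e$.

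For the converse, assume the identity $\gamma\circ\partiala\circ\gamma^{-1}=\chi^e(\gamma)\partiala$ holds for some $e\in M$. Given $f\in A_u$, I decompose $\partiala(f)=\sum_v g_v$ with $g_v\in A_v$ and plug in; grouping weights on both sides yields
$$\sum_v\chi^{v-u}(\gamma)\,g_v=\sum_v\chi^e(\gamma)\,g_v\qquad\text{for every }\gamma\in\TT.$$
By linear independence of distinct characters of $\TT$, this forces $g_v=0$ whenever $v\neq u+e$, so $\partiala(A_u)\subseteq A_{u+e}$ and $\partiala$ is homogeneous of degree $e$.

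The argument is essentially formal; the only nontrivial ingredient is the linear independence of torus characters, which is exactly what allows weight-by-weight separation. The one delicate point worth mentioning is the global coherence of the degree $e$ across different affine charts; this is automatic, since $\partiala$ extends uniquely to a derivation of $\KK(X)$ and the identity $\gamma\circ\partiala\circ\gamma^{-1}=\chi(\gamma)\partiala$ determines $\chi$ uniquely from $\partiala$, so the local characters computed on any two $\TT$-invariant affine charts must agree on their overlap.
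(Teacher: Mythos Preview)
Your proof is correct and follows essentially the same approach as the paper: reduce to a $\TT$-invariant affine chart, write the compatibility condition as $\gamma\circ\partiala\circ\gamma^{-1}=\chi^e(\gamma)\partiala$, and compare weight components via the $M$-grading to see that this identity is equivalent to $\partiala$ shifting degrees by $e$. The paper compresses both directions into a single ``if and only if'' computation, whereas you spell out each implication separately and add the remark on global coherence of $e$ across charts, but the substance is identical.
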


\begin{proof}
  Let $U\subseteq X$ be a $\TT$-invariant affine open set and let
  $\chi^e$, $e\in M$, be the character in the definition of compatible
  $\GA$-action. We have
  $\partiala=\chi^{-e}(\gamma)\cdot\gamma\circ\partiala\circ\gamma^{-1}$,
  $\forall\gamma\in\TT$. Take a homogeneous element
  $f\in \KK[U]$ of degree $u'$ and let
  $\partiala(f)=g=\sum_{u\in M} g_u\chi^u$. Now a routine computation
  yields
  \begin{align*}
    \sum_{u\in M}g_u\chi^u&=\partiala(f)=\chi^{-e}(\gamma)\cdot\gamma\circ
                            \partiala\circ\gamma^{-1}(f) \\
                          &=\chi^{-e-u'}(\gamma) \cdot\sum_{u\in M}\chi^u(\gamma)\cdot
                            g_u\chi^u,\qquad\forall\gamma\in\TT\,.
    \end{align*}
    This equality holds for all $\gamma\in \TT$ if and only if $g_u=0$
    for $u\neq e+u'$. In particular, $\partiala$ is homogeneous.
\end{proof}

Remark that the above proof shows that the character $\chi$ in the
definition of compatible $\GA$-action equals $\chi^e$, where $e$ is
the degree of $\partiala$. We need the following refinement of
Theorem~\ref{open-LND} for compatible $\GA$-actions on
$\TT$-varieties.

\begin{proposition}\label{T-open-LND}
  Let $X$ be a semiaffine $\TT$-variety and
  $\partiala:\mathcal{O}_{X}\rightarrow\mathcal{O}_{X}$ be a
  derivation of the structure sheaf. Then $\partiala$
  defines a compatible regular $\GA$-action $\alpha$ on $X$ if and
  only if $\partiala$ is homogeneous and  there exists a nonempty $\TT$-invariant affine open set
  $U\subseteq X$ such that
$\Gamma(U,\partiala):\Gamma(U,\mathcal{O}_{X}) \rightarrow\Gamma(U,\mathcal{O}_{X})$ is locally nilpotent.
\end{proposition}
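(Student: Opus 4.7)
The plan for the \emph{if} direction is to chain the previously established ingredients. If $\partiala$ is homogeneous and there is a $\TT$-invariant affine open $U\subseteq X$ with $\Gamma(U,\partiala)$ locally nilpotent, then $U$ is in particular a nonempty affine open, so Theorem~\ref{open-LND} already guarantees that $\partiala$ defines a regular $\GA$-action $\alpha$ on $X$; then Lemma~\ref{root-hom} promotes $\alpha$ to a compatible action thanks to the assumed homogeneity.

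For the \emph{only if} direction, I would proceed as follows. Assume $\partiala$ defines a compatible regular $\GA$-action $\alpha$. Lemma~\ref{root-hom} immediately gives homogeneity of $\partiala$, so the remaining task is to exhibit a $\TT$-invariant affine open $U$ on which $\Gamma(U,\partiala)$ is locally nilpotent. The idea is to combine the $\TT$- and $\GA$-actions into an action of the semidirect product $H=\TT\ltimes\GA$, which is a connected solvable linear algebraic group whose group structure on $X$ is witnessed precisely by the compatibility hypothesis. I would then apply the version of Sumihiro's theorem that produces, for any regular action of a connected solvable linear algebraic group on a normal variety, an invariant affine open cover; this gives an $H$-invariant affine open cover of $X$. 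Any $U$ from this cover is both $\TT$-invariant and $\GA$-invariant. The $\GA$-invariance ensures that $\alpha$ restricts to a regular $\GA$-action $\alpha|_U:\GA\times U\to U$ on the affine variety $U$, and the classical correspondence between $\GA$-actions and locally nilpotent derivations on affine varieties identifies $\Gamma(U,\partiala)$ with the associated LND.

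The main obstacle will be the appeal to Sumihiro's theorem in its solvable-group form: for the torus alone this is the classical version invoked elsewhere in the paper, but here we genuinely need the invariant affine open to also be $\GA$-stable so that $\alpha$ restricts to $U$. An alternative, more computational path would be to fix any $\TT$-invariant affine open $U$ via the torus version of Sumihiro, decompose a section $f\in\Gamma(U,\OO_X)$ into $\TT$-weight components, and use the $\TT$-equivariant structure of $\alpha^{-1}(U)\subseteq \GA\times X$ together with the semiaffineness of $X$ to truncate the formal Taylor expansion $\sum_{i\ge 0}t^{i}\partiala^{i}(f)/i!$ uniformly in $f$; this route requires more careful bookkeeping and is the reason I would prefer the cleaner group-theoretic argument above.
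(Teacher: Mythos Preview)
Your ``if'' direction is correct and matches the paper's proof verbatim.

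For the ``only if'' direction, your overall strategy is right and is exactly the paper's: assemble the $\TT$- and $\GA$-actions into the connected solvable group $G=\GA\rtimes\TT$ and produce a $G$-invariant affine open $U$; then $U$ is $\TT$-invariant and, being $\GA$-invariant affine, carries $\partiala$ as an LND. The gap is in the tool you invoke. There is no version of Sumihiro's theorem giving a $G$-invariant affine open \emph{cover} for a general connected solvable $G$ acting on a normal variety: already for $\GA$ (or the full Borel $\GA\rtimes\GM\subset\PGL_2$) acting on $\PP^1$, the fixed point at infinity has no $G$-invariant affine neighborhood. Sumihiro's affine-cover statement is specific to tori. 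What you actually need, and what suffices, is the weaker assertion that \emph{some} nonempty $G$-invariant affine open exists. The paper obtains this via Rosenlicht: pass to a $G$-invariant open admitting a geometric quotient \cite{Ro63}, use local triviality \cite[Theorem~10]{Ro56} to write it, after shrinking, as $Y\times U/G$ with $Y$ a generic $G$-orbit, note that $Y$ is affine since $G$ is solvable, and shrink further so that $U/G$ is affine. The paper also points to \cite[Theorem~3]{Po16} as a direct reference for this existence statement. Once you replace your Sumihiro appeal with either of these, the argument goes through and coincides with the paper's.
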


\begin{proof}
  Assume first that $\alpha$ is compatible and let $\chi$ 
  be the character in the definition of compatible $\GA$-action. By
  Theorem~\ref{open-LND}, there exists an affine open set
  $U\subseteq X$ where $\partiala$ is locally nilpotent. 
  We~will show that we can take this open set $U$ to be
  $\TT$-invariant. Indeed, if $\partiala$ defines a compatible
  $\GA$-action, then the $\TT$-action and the $\GA$-action span an
  action of the solvable group $G=\GA\rtimes\TT$ where the~homomorphism for the semidirect product 
  $\TT\rightarrow \Aut(\GA)$ is given by
  $\gamma\mapsto (t\mapsto \chi(\gamma)\cdot t)$. By \cite{Ro63},
  there is a $G$-invariant open subset $U\subseteq X$ where a geometric quotient
  $U\mapsto U/G$ exists. By \cite[Theorem~10]{Ro56}, restricting
  the open set $U$ we can assume $U\simeq Y \times U/G$, where $Y$ is a general $G$-orbit in $X$.
  Being a homogeneous space of a solvable group, $Y$ is an affine variety. 
  Finally,  we can further restrict $U$ so that $U/G$ is affine, proving that
  $U$ can indeed be taken as an affine open subset; see also
  \cite[Theorem~3]{Po16} for a direct argument on the existence of such affine
  open subset $U$. In particular, $\partiala$ is a homogeneous LND on the
  $\TT$-invariant affine open set $U$.

  For the converse, if there is a $\TT$-invariant affine open set $U$
  where $\partiala$ is an LND, then Theorem~\ref{open-LND} shows that
  $\partiala$ defines a $\GA$-action on $X$ and such action is compatible
  by Lemma~\ref{root-hom} since $\partiala$ is homogeneous.
\end{proof}

\begin{definition}
Let now $X$ be a $\TT$-variety and $\KK(X)^{\TT}$ be the field of
$\TT$-invariant rational functions. A~homogeneous derivation
$\partiala:\OO_X\rightarrow\OO_X$ is called \emph{vertical} if
$\partiala(\KK(X)^\TT)=0$, and \emph{horizontal} otherwise. In case $\partiala$
is the derivation associated to a $\GA$-action, the corresponding
$\GA$-action is called vertical or horizontal, respectively.
\end{definition}

In geometric terms, the condition $\partiala(\KK(X)^\TT)=0$ means that 
a general orbit of the $\GA$-action on $X$ is contained in a $\TT$-orbit closure.

\subsection{Additive group actions on toric varieties}

In this section, we recall the characterization of compatible regular
$\GA$-actions on a toric variety $X$ in terms of the corresponding
regularly integrable derivations. For the results in this subsection,
the base field $\KK$ is not necessarily algebraically closed. These
results were first given in \cite{De} in an implicit way. Demazure's
approach was generalized and simplified in \cite{Lie10,Li,DuLi16}.

Let $X=X(\Sigma)$ be a toric variety. Let $\rho\in N$ and $e\in M$.
We define the linear map $\partiala_{\rho,e}:\KK[M]\rightarrow \KK[M]$,
$\chi^{u}\mapsto \rho(u)\chi^{u+e}$. It is a routine verification that
$\partiala_{\rho,e}$ satisfies the Leibniz rule and thus it defines a
homogeneous derivation on $\KK[M]$. Moreover, every homogeneous
derivation on $\KK[M]$ is a multiple of $\partiala_{\rho,e}$ for some
$\rho\in N$ and some $e\in M$. Since $\partiala_{a\rho,e}=a\partiala_{\rho,e}$,
without loss of generality we may and will assume in the sequel that
$\rho$ is primitive, i.e.  $\rho/n\notin N$ for all $n>1$. Let $\Sigma(1)$ denote 
the set of primitive vectors on the rays of $\Sigma$. The next theorem taken from \cite[Proposition~3.8]{DuLi16} gives a
classification of compatible regular $\GA$-actions on $X(\Sigma)$.

\begin{theorem}\label{toric-regular}
  Let $X(\Sigma)$ be a semiaffine toric variety.  Then $\partiala$ is
  the derivation of a compatible regular action
  $\alpha:\GA\times X(\Sigma)\rightarrow X(\Sigma)$ 
  if and only if $\partiala$ is a scalar multiple of
  $\partiala_{\rho,e}$ for some $\rho\in\Sigma(1)$ and $e\in M$ such that
  $\rho(e)=-1$, and $\rho'(e)\geq 0$ for all
  $\rho'\in\Sigma(1)\setminus\{\rho\}$.
\end{theorem}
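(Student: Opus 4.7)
The strategy is to apply Proposition~\ref{T-open-LND}, which reduces the problem to identifying those homogeneous derivations on $\mathcal{O}_{X(\Sigma)}$ that are locally nilpotent on some $\TT$-invariant affine open. By Lemma~\ref{root-hom} any compatible $\GA$-action on $X(\Sigma)$ corresponds to a homogeneous derivation $\partiala$ of the structure sheaf. The open torus $\TT$ is the affine chart $X(\{0\})\subseteq X(\Sigma)$, so restricting $\partiala$ there gives a homogeneous derivation of $\KK[M]$. By the description of homogeneous derivations of $\KK[M]$ recalled just before the theorem, there is a nonzero scalar $c\in\KK^*$ such that $\partiala=c\,\partiala_{\rho,e}$ on $\KK[M]$ with $\rho\in N$ primitive and $e\in M$. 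The problem thus reduces to determining the $(\rho,e)$ for which $\partiala_{\rho,e}$ extends to a regular derivation on $X(\Sigma)$ which is locally nilpotent on some $\TT$-invariant affine chart.

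A direct induction gives
\[
\partiala_{\rho,e}^{n}(\chi^u)=\rho(u)\,\rho(u+e)\cdots\rho(u+(n-1)e)\,\chi^{u+ne},
\]
so for the sequence to terminate one needs $\rho(u)+k\rho(e)=0$ for some integer $k\ge 0$. Requiring this for every $u\in\sigma_0^\vee\cap M$ forces $\rho$ to have constant sign on $\sigma_0^\vee$ opposite to that of $\rho(e)$; after flipping $\rho\mapsto -\rho$ and absorbing the sign into the scalar, we may assume $\rho\ge 0$ on $\sigma_0^\vee$, equivalently $\rho\in\sigma_0$, together with $\rho(e)<0$. Next, regularity of $\partiala_{\rho,e}$ on a chart $X(\sigma)$ is equivalent to the implication
\[
u\in\sigma^\vee\cap M,\ \rho(u)\neq 0\ \Longrightarrow\ u+e\in\sigma^\vee.
\]
Fix $\rho'\in\Sigma(1)\setminus\{\rho\}$ and a maximal cone $\sigma$ with $\rho'\in\sigma(1)$. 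A dimension count shows that unless $\rho=-\rho'$, the facet $\rho'^\perp\cap\sigma^\vee$ is not contained in $\rho^\perp$, hence contains a lattice point $u$ with $\rho(u)\neq 0$ and $\rho'(u)=0$; the implication then gives $\rho'(e)\ge 0$. In the remaining case $\rho=-\rho'$ one has $\rho'(e)=-\rho(e)>0$. This yields $\rho'(e)\ge 0$ for every $\rho'\in\Sigma(1)\setminus\{\rho\}$.

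The main obstacle is proving $\rho\in\Sigma(1)$. Suppose, for contradiction, that $\rho$ lies in the relative interior of a face $\tau\preceq\sigma_0$ of dimension at least two. Writing $\rho=\sum_i a_i\rho_i$ with $a_i>0$ and $\rho_i\in\tau(1)\subseteq\sigma_0(1)\setminus\{\rho\}$, the inequalities $\rho_i(e)\ge 0$ from the previous step force $\rho(e)=\sum a_i\rho_i(e)\ge 0$, contradicting $\rho(e)<0$. Hence $\rho\in\Sigma(1)$. Choosing $\sigma_0$ so that $\rho\in\sigma_0(1)$, primitivity of $\rho$ together with the fact that $\rho$ is strictly positive on $\operatorname{rel.int}(\rho^\perp\cap\sigma_0^\vee)^\complement$ inside $\sigma_0^\vee$ yields a lattice point $u\in\sigma_0^\vee\cap M$ with $\rho(u)=1$; applying the regularity implication to such $u$ gives $\rho(u+e)\ge 0$, i.e.\ $\rho(e)\ge -1$, which combined with $\rho(e)<0$ sharpens to $\rho(e)=-1$.

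For the converse, given $(\rho,e)$ satisfying the three listed conditions, a short case distinction (according to whether $\rho\in\sigma(1)$, $\rho\notin\sigma$ at all, or $\rho$ belongs to some higher-dimensional face of $\sigma$; the third case does not arise since the hypothesis $\rho\in\Sigma(1)$ restricts $\rho$ to rays) verifies that $u\in\sigma^\vee\cap M$ with $\rho(u)\neq 0$ implies $u+e\in\sigma^\vee$ for every $\sigma\in\Sigma$, so $\partiala_{\rho,e}$ defines a global homogeneous derivation of $\mathcal{O}_{X(\Sigma)}$. Picking a cone $\sigma_0$ with $\rho\in\sigma_0(1)$, the iteration formula becomes $\partiala_{\rho,e}^{n}(\chi^u)=\rho(u)(\rho(u)-1)\cdots(\rho(u)-n+1)\,\chi^{u+ne}$ on $\KK[\sigma_0^\vee\cap M]$, which vanishes for $n>\rho(u)$; hence $\partiala_{\rho,e}$ is locally nilpotent on the $\TT$-invariant affine open $X(\sigma_0)$, and Proposition~\ref{T-open-LND} furnishes the required compatible regular $\GA$-action.
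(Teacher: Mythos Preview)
The paper does not give its own proof of this statement: it is quoted verbatim from \cite[Proposition~3.8]{DuLi16} and no argument is supplied. Your proposal therefore cannot be compared against a proof in the paper, but it can be assessed on its own merits, and it is essentially correct.

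Your route is the natural one: invoke Proposition~\ref{T-open-LND} and Lemma~\ref{root-hom} to reduce to analyzing when a homogeneous derivation $\partiala_{\rho,e}$ of $\KK[M]$ extends regularly to all charts $X(\sigma)$ and is locally nilpotent on one of them. Two remarks are worth making. First, you silently use that every $\TT$-invariant affine open of $X(\Sigma)$ is of the form $X(\sigma_0)$ for some $\sigma_0\in\Sigma$; this is a standard fact about toric varieties (see e.g.\ \cite[Proposition~3.2.6]{CLS}) but deserves a sentence, since Proposition~\ref{T-open-LND} only hands you an abstract $\TT$-invariant affine open. Second, the step ``primitivity of $\rho$ \ldots\ yields a lattice point $u\in\sigma_0^\vee\cap M$ with $\rho(u)=1$'' is true but your one-line justification is opaque. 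A clean argument: choose $u_0\in M$ with $\rho(u_0)=1$ (primitivity), pick a lattice point $w$ in the relative interior of the facet $\rho^\perp\cap\sigma_0^\vee$ (which spans $\rho^\perp$, so such $w$ exists), and note that $\rho'(w)>0$ for every $\rho'\in\sigma_0(1)\setminus\{\rho\}$; then $u_0+kw$ lies in $\sigma_0^\vee$ for $k\gg 0$ and still satisfies $\rho(u_0+kw)=1$. Alternatively, once such a $u$ is in hand, local nilpotence alone already forces $\rho(e)=-1$ (the factor $\rho(u)+k\rho(e)=1+k\rho(e)$ must vanish for some $k\ge 0$), so the appeal to regularity there is unnecessary.

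With these clarifications your argument goes through. The contradiction you use to pin $\rho$ to a ray of $\Sigma$ (writing $\rho$ as a positive combination of the rays of the minimal face containing it and pairing against $e$) is the key nontrivial step and is correct; note that it already forces $\rho\in\sigma_0(1)$, so the later ``choosing $\sigma_0$ so that $\rho\in\sigma_0(1)$'' is automatic rather than an additional choice.
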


\subsection{Higher complexity $\TT$-varieties}

We now describe vertical $\GA$-actions on higher complexity
$\TT$-varieties. In this subsection the field $\KK$ needs to be
algebraically closed. First, we need the following technical lemma.

\begin{lemma} \label{codim2}
Let $X$ be a normal variety and $\partiala:\KK(X)\rightarrow \KK(X)$
be a $\KK$-derivation of the field of rational functions on
$X$. Assume that $\partiala$ is regular on a big open set, i.e.,
$\partiala$ restricts to a derivation
$\partiala_U:\OO_U\rightarrow \OO_U$ on an open set $U\subseteq X$ with complement
of codimension at least~2. Then $\partiala$ is regular on $X$.
\end{lemma}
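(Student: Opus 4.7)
The key tool is the algebraic Hartogs principle for normal varieties: if $Y$ is a normal variety and $W\subseteq Y$ is an open subset whose complement has codimension at least two, then every regular function on $W$ extends uniquely to a regular function on $Y$. This follows from the identity $A=\bigcap_{\mathrm{ht}(\mathfrak{p})=1}A_{\mathfrak{p}}$ valid in any normal Noetherian integral domain $A$, which forces any rational function regular at every codimension-one point of a normal variety to be globally regular.

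To prove the lemma, I would check that $\partiala(\KK[V])\subseteq \KK[V]$ for every affine open $V\subseteq X$; this is enough, since affine opens cover $X$ and a derivation of $\OO_X$ is determined by its action on each of them. Fix $f\in\KK[V]$. As $f$ is regular on $V$, it is regular on $V\cap U\subseteq U$, and since $\partiala$ restricts to a derivation of $\OO_U$ by hypothesis, the rational function $\partiala(f)\in\KK(X)=\KK(V)$ belongs to $\Gamma(V\cap U,\OO_X)$. The complement $V\setminus(V\cap U)$ is contained in $X\setminus U$, hence has codimension at least two in $X$, and therefore also in the affine open $V$. Applying the Hartogs principle to the normal affine variety $V$ shows that $\partiala(f)$, being regular on the big open set $V\cap U\subseteq V$, is regular on all of $V$; that is, $\partiala(f)\in\KK[V]$.

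Since the argument is essentially a direct appeal to normal extension, there is no serious obstacle. The only point worth verifying is that the codimension-two condition descends from $X$ to any affine open $V$, which is immediate from $V\setminus(V\cap U)=V\cap(X\setminus U)$ together with the fact that the codimension of a closed subvariety in an open subscheme agrees with its codimension in the ambient variety.
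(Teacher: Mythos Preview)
Your proof is correct and follows essentially the same approach as the paper: both reduce to an affine open, apply the hypothesis to get $\partiala(f)$ regular on the big open set, and then invoke the algebraic Hartogs principle for normal varieties (the paper cites \cite[Corollary~11.4]{Eis95} for $\Gamma(U,\OO_X)=\Gamma(X,\OO_X)$). The only cosmetic difference is that the paper first reduces to $X$ affine and works with $U$ itself, whereas you work with an arbitrary affine open $V$ and the intersection $V\cap U$.
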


\begin{proof}
The statement is local, so without loss
of generality, we assume $X$ is affine. Now, since $\partiala|_{U}$
is regular, we have $\partiala|_{U}:\OO_U\rightarrow \OO_U$. Hence,
we can take global sections obtaining
$\Gamma(U,\partiala):\Gamma(U,\OO_X)\rightarrow\Gamma(U,\OO_X)$.
Since $X$ is normal, we have $\Gamma(U,\OO_X)=\Gamma(X,\OO_X)$
\cite[Corollary~11.4]{Eis95}. This yields that $\partiala$ is regular.
\end{proof}

Let $Y$ be a normal projective variety and $X=X(\SF)$ where $\SF$ is
a divisorial fan on $Y$. Let $\alpha$ be a vertical $\GA$-action on
$X$ with corresponding regularly integrable derivation
$\partiala:\OO_X\rightarrow \OO_X$ and $K_0=\KK(X)^\TT=\KK(Y)$ be the
field of $\TT$-invariant rational functions. The normalization
$\overline{X}$ of the base extension $X\times_{\Spec\KK} \Spec K_0$ is a
toric variety over the field $K_0$ with fan $\Sigma=\tail\SF$.

Since the $\GA$-action is vertical, the field of $\GA$-invariant
functions $\KK(X)^{\GA}$ contains $K_0$. This yields that the
$\GA$-action lifts to $\overline{X}$ and since
$\KK(X)=K_0(\overline{X})$, we have that both, the $\GA$-action on $X$
and the $\GA$-action on $\overline{X}$, are given by the same
derivation $\partiala:\KK(X)\rightarrow\KK(X)$. Now
Theorem~\ref{toric-regular} yields
$\partiala=\phi\partiala_{\rho,e}:=\partiala_{\phi,\rho,e}$ where
$\phi\in K_0=\KK(Y)$, $\rho\in \Sigma(1)$, and $e\in M$.

Let $P(0)$ denote the set of vertices of a polyhedron $P$. 
To state our classification of regular vertical $\GA$-actions on
$X(\SF)$ we need to define the following $\QQ$-divisor in analogy with
\cite[Theorem~2.4]{Li}: 
$$D_e:=\sum_{Z\subseteq
  \Loc\SF}\min\{v(e)\mid v\in \DDD_Z(0), \DDD\in \SF\}\cdot Z\,,
$$
where $\Loc\SF$ is the union of the $\Loc\DDD$ for all $\DDD\in\SF$,
see also \cite[Theorem~1.7]{ArLi12}.

\begin{theorem}\label{ver-regular} %
Let $X(\SF)$ be a semiaffine $\TT$-variety and $\Sigma=\tail\SF$. Then a derivation $\partiala$ of the field $\KK(X(\SF))$ is the derivation of a vertical compatible regular action
$\alpha:\GA\times X(\SF)\rightarrow X(\SF)$ if and only if $\partiala=\partiala_{\phi,\rho,e}$, where $e\in M$, $\phi\in\Gamma(\Loc\SF,\OO(D_e))$, and $\rho\in\Sigma(1)$ such that $\rho(e)=-1$, and $\rho'(e)\geq 0$ for all $\rho'\in\Sigma(1)\setminus\{\rho\}$.
\end{theorem}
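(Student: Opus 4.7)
The plan is to first reduce the shape of $\partiala$ to that supplied by Theorem~\ref{toric-regular} via base change to $K_0 = \KK(X)^\TT = \KK(Y)$, and then to translate regularity of the resulting derivation on $X(\SF)$ into the divisorial condition on $\phi$ by checking on the affine pieces supplied by Lemma~\ref{T-codim2}.

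First I would carry out the reduction already outlined in the paragraphs just before the theorem: a vertical compatible $\GA$-action on $X(\SF)$ lifts to the $K_0$-toric variety $\overline{X}$ with fan $\Sigma$, and Theorem~\ref{toric-regular} applied over the field $K_0$ forces $\partiala = \phi\,\partiala_{\rho,e}$ with $\phi \in K_0$, $\rho \in \Sigma(1)$, $e \in M$, $\rho(e) = -1$, and $\rho'(e) \geq 0$ for all $\rho' \in \Sigma(1)\setminus\{\rho\}$. Conversely, a derivation $\partiala_{\phi,\rho,e}$ of this form is automatically $M$-homogeneous of degree $e$. By Lemma~\ref{root-hom} and Proposition~\ref{T-open-LND}, the remaining task is to characterize those $\phi \in K_0$ for which $\partiala_{\phi,\rho,e}$ extends to a regular derivation of $\OO_{X(\SF)}$ that is locally nilpotent on some $\TT$-invariant affine open subset.

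For regularity, I would use Lemmas~\ref{T-codim2} and~\ref{codim2}: the union $U$ of the affine pieces $X(\DDD')$ with $\DDD' \in \mathcal{C}$ is a big open subset of $X(\SF)$, so $\partiala_{\phi,\rho,e}$ is regular on $X(\SF)$ if and only if it is regular on each such $X(\DDD')$. On a piece $X(\DDD_\rho)$ the patch is an affine toric piece over $\KK$ and regularity is automatic from the sign conditions already imposed on $(\rho,e)$ by the toric classification. On a piece $X(\DDD_{Z,v})$ with $\DDD_{Z,v} = v\cdot Z + \emptyset\cdot(\supp\SF\setminus Z)$, the $M$-graded description of $A(\DDD_{Z,v})$ from Theorem~\ref{AH} places its degree-$u$ component inside $\KK(Y)$ with pole order along $Z$ at most $-\langle v,u\rangle$; a direct computation then shows that $\partiala_{\phi,\rho,e}$ preserves $A(\DDD_{Z,v})$ if and only if $\ord_Z(\phi) + \langle v,e\rangle \geq 0$. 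Letting $(Z,\DDD,v)$ range over all triples with $Z \subseteq \Loc\SF$, $\DDD \in \SF$, and $v \in \DDD_Z(0)$, and taking minima, this collection of inequalities is precisely $\did(\phi) + D_e \geq 0$ on $\Loc\SF$, i.e.\ $\phi \in \Gamma(\Loc\SF,\OO(D_e))$, which proves both directions.

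The local nilpotence required by Proposition~\ref{T-open-LND} is then produced on any affine toric patch $X(\DDD_\rho) \in \mathcal{C}$, where $\partiala_{\phi,\rho,e}$ becomes a scalar multiple of $\partiala_{\rho,e}$ over $K_0$ and is therefore LND by Theorem~\ref{toric-regular}. I expect the main obstacle to be precisely the affine computation on $X(\DDD_{Z,v})$: one must match up the way $\partiala_{\rho,e}$ shifts the $M$-degree by $e$ with how $\phi$ can modify the pole order along $Z$ in each graded piece, so as to see that the single vertex-by-vertex inequality $\ord_Z(\phi) \geq -\langle v,e\rangle$ captures regularity on that patch. Once this affine calculation is in hand, the global assembly via the codimension-two argument is routine.
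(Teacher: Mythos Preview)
Your proposal is correct and follows essentially the same route as the paper: reduce the form of $\partiala$ via the base change to the $K_0$-toric variety $\overline{X}$ and Theorem~\ref{toric-regular}, then establish regularity by checking on the big open set from Lemma~\ref{T-codim2} and appealing to Lemma~\ref{codim2}, with the substantive computation being $\ord_Z(\phi)+v(e)\geq 0$ on each $X(\DDD_{Z,v})$. The only minor divergence is that the paper obtains local nilpotence by citing \cite[Theorem~2.4]{Li} on an affine chart $X(\DDD)$ with $\rho\in\tail\DDD(1)$ for some $\DDD\in\SF$, rather than on $X(\DDD_\rho)$; note also that $X(\DDD_\rho)$ is not literally toric over $\KK$ (it still has complexity $\dim Y$), though your intended argument via the $K_0$-toric structure goes through.
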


\begin{proof}
  Let $\partiala=\partiala_{\phi,\rho,e}$. The fact that $\partiala$
  is locally nilpotent on any affine open set
  $X(\DDD)\subseteq X(\SF)$, where $\DDD\in\SF$ is such that $\rho_e$
  is a ray in $\tail\DDD$ is proven in \cite[Theorem~2.4]{Li}, see
  also~\cite[Theorem~1.7]{ArLi12}. Hence, by
  Proposition~\ref{T-open-LND} we only need to prove that $\partiala$
  is regular. Moreover, by Lemma~\ref{codim2} it is enough to verify
  that $\partiala$ is regular on a big open set and thus by
  Lemma~\ref{T-codim2} it suffices to verify that $\partiala$
  restricts to a regular derivation on $X(\DDD)$ for all $\DDD$ in the
  family $\mathcal{C}$ of p-divisors therein.

  Let $Z\subset X(\SF)$ be a prime divisor and $v$ be a vertex in some
  polyhedral coefficient $\DDD'_Z$ of a p-divisor $\DDD'$ in $\SF$.
  $\SF$. Assume further that $\DDD=\DDD_{Z,v}$ is contained in the
  family~$\CF$. Recall that the tail cone $\tail\DDD$ is
  $\{0\}\in N_\QQ$ and for every $u\in M$, the element
  $f\chi^u\in A(\DDD)$ if and only if $\ord_Z(f)+v(u)\geq 0$. Hence
  for $\partiala$ to leave $A(\DDD)$ invariant we need that for all
  $f\chi^u\in A(\DDD)$ the element $\phi f\chi^{u+e}\in A(\DDD)$. This
  yields
\begin{align*}
  \ord_Z\phi+\ord_Z f+v(u+e)&= \big(\ord_Z\phi+v(e)\big)+\big(\ord_Z f+v(u)\big)\geq 0
\end{align*}
The last inequality holds for all $u\in M$ if and only if
$\ord_Z\phi+v(e)\geq 0$. Indeed, recall that the evaluations $\DDD(u)$
are all $\mathbb{Q}$-Cartier. Replacing $u$ by a multiple, we can
assume that $\DDD(u)$ is Cartier which implies that it is locally
principal. Hence, there exists exists $f\in \KK(Y)$ satisfying
$\ord_Z(f)+\lfloor v(u)\rfloor=0$. This yields that the rational
function $\phi$ belongs to $\Gamma(\Loc\SF,\OO(D_e))$. The remaining
conditions of the theorem follow from Theorem~\ref{toric-regular}
since $\partiala$ defines a regular $\GA$-action on the toric
$\KK(Y)$-variety $\overline{X}$ given by the fan $\Sigma$.
\end{proof}

\section{Lie algebras generated by root derivations}
\label{s4}

In this and next sections we study Lie algebras generated by a finite
collection of derivations of a field. The study is motivated by the
results of the previous section.  On the other hand, the objects we
are dealing with are elementary, and for convenience of the reader we
re-introduce all necessary notions and definitions.

Let $\KK$ be an algebraically closed field of characteristic zero, $Y$
be an algebraic variety over $\KK$, and $\KK\subseteq K_0:=\KK(Y)$ be
the corresponding field extension. Let $M$ be a lattice of rank $n$
and $K_0(M)$ be the quotient field of the group algebra $K_0[M]$.

Let us introduce a class of $K_0$-derivations of the field
$K_0(M)$. Let $N=\Hom(M,\ZZ)$ be the dual lattice and let
$N\times M\to\ZZ$, $(v,u)\mapsto\langle v,u\rangle=v(u)$ be the
corresponding duality pairing. Given a fan $\Sigma\subseteq N_{\QQ}$,
we say that a vector $e\in M$ is a \emph{Demazure root} of the fan
$\Sigma$ if there is ray $\rho\in\Sigma(1)$ such that
$\langle \rho,e\rangle=-1$ and $\langle \rho',e\rangle\ge 0$ for all
$\rho'\in\Sigma(1)$, $\rho'\ne\rho$. We say that the ray $\rho$ is
\emph{associated} with the root $e$.

Let us define a \emph{root derivation} $D_{\phi,\rho,e}$ of the field
$K_0(M)$, where $\phi\in K_0$ and $e$ is a Demazure root of the fan
$\Sigma$ with the associated ray $\rho$, by the formula
$$
D_{\phi,\rho,e}(\chi^u)=\phi\cdot\rho(u)\cdot\chi^{u+e}.
$$
Since the field extension $K_0\subseteq K_0(M)$ is generated by the
elements $\chi^u$, $u\in M$, and $\rho$ is a linear function on $M$,
this formula defines a $K_0$-derivation of the field $K_0(M)$.

\begin{remark}
By Theorem~\ref{ver-regular}, the derivation corresponding to a
vertical $\GA$-action on a semiaffine $\TT$-variety $X(\SF)$ is a
root derivation of the field $\KK(X)=\KK(Y)(M)$ with respect to the
tail fan $\Sigma$.
\end{remark}

\smallskip

Let $m\in\ZZ_{\ge 2}$ and $\DD=\{D_{\phi_i,\rho_i,e_i}\}_{i=1}^m$ be a
set of root derivations of the field $K_0(M)$. We let
$$
D_i=D_{\phi_i,\rho_i,e_i}, \quad \phi(D_i)=\phi_i, \quad \rho(D_i)=\rho_i, \quad \text{and} \quad e(D_i)=e_i.
$$
Let us further assume that the derivations $D_i$ are pairwise non-proportional over the field~$K_0$.

\begin{definition} \label{def:sets}
  A set $\DD$ is called \emph{cyclic} if
  $\langle \rho_{i+1},e_i\rangle>0$ for all $i=1,\ldots,m$, where we
  set $\rho_{m+1}:=\rho_1$. A set $\DD$ is \emph{almost simple} if
$$
\langle \rho,e_i\rangle=
\begin{cases}
-1,&\mbox{if } \rho=\rho_i;\\
\ 1,&\mbox{if } \rho=\rho_{i+1};\\
\ 0,&\mbox{if } \rho\in\Sigma(1)\setminus\{\rho_i,\rho_{i+1}\}.
\end{cases}
$$
An almost simple set $\DD$ is \emph{simple} if
$\prod_{i=1}^m\phi_i\in\KK$. A simple set is \emph{very simple} if any of
its cyclic subsets is simple as well.
\end{definition}

\begin{remark}
  Without loss of generality we may assume in the definition of a very
  simple set that for every cyclic subset the product of corresponding
  functions $\phi_i$ equals $1$. Indeed, we take a point $y_0\in Y$
  such that all functions $\phi_i$ are defined and not equal zero at
  $y_0$, and replace each function $\phi_i$ by
  $\frac{1}{\phi_i(y_0)}\phi_i$. This changes every derivation $D_i$
  by a scalar multiple.
\end{remark}

We say that a root $e$ associated with a ray $\rho$ is
\emph{elementary}, if $\langle \rho,e\rangle=-1$,
$\langle \rho',e\rangle=1$ for some $\rho'\in\Sigma(1)$, and
$\langle \rho'',e\rangle=0$ for all
$\rho''\in\Sigma(1)\setminus\{\rho,\rho'\}$. A root derivation $D$ is
called \emph{elementary} if $e(D)=e$ with some elementary root
$e$. Clearly, a cyclic set $\DD$ is almost simple if and only if $\DD$
consists of elementary derivations.

\medskip

Consider the Lie algebra $\Der_{K_0}(K_0(M))$ of all $K_0$-derivations
of the field $K_0(M)$.  Let $\LIE(\DD)$ be the Lie algebra over the
field $K_0$ generated by $\DD$ in $\Der_{K_0}(K_0(M))$. Further, let
$\Lie(\DD)$ be the Lie algebra over the field $\KK$ generated by $\DD$
in the Lie algebra $\Der(K_0(M))$ of all $\KK$-derivations of the
field $K_0(M)$. We denote by $r=r(\DD)$ the number of pairwise
distinct elements of the set $\{\rho_1,\ldots,\rho_m\}$.

\begin{proposition} \  \label{propsl}
\begin{enumerate}[(1)]  %
\item If the set $\DD$ is almost simple, then the Lie algebra
  $\LIE(\DD)$ is isomorphic to $\sl_{\, r}(K_0)$.
\item If the set $\DD$ is very simple, then the Lie algebra
  $\Lie(\DD)$ is isomorphic to $\sl_{\,r}(\KK)$.
\end{enumerate}
\end{proposition}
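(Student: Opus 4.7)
The plan is to realize $\LIE(\DD)$ and $\Lie(\DD)$ explicitly as Chevalley copies of $\sl_r(K_0)$ and $\sl_r(\KK)$ by producing all structure constants through iterated commutators. The key starting point is the general bracket formula for root derivations, derivable from the Leibniz rule together with the fact that each $D_i$ is $K_0$-linear:
\[
[D_{\phi,\rho,e},\,D_{\phi',\rho',e'}](\chi^u)=\phi\phi'\bigl(\langle\rho,e'\rangle\rho'(u)-\langle\rho',e\rangle\rho(u)\bigr)\chi^{u+e+e'}.
\]
In the almost simple case, the pairing $\langle\rho_k,e_\ell\rangle$ is nonzero only when $\rho_k=\rho_\ell$ (value $-1$) or $\rho_k=\rho_{\ell+1}$ (value $1$), so iterated commutators $[D_i,[D_{i+1},[\ldots,D_{j-1}]\ldots]]$ reduce to single-summand homogeneous derivations $\tilde D_{ij}$ of degree $e_i+\cdots+e_{j-1}$ and linear part a $K_0^*$-multiple of $\rho_i$. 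These will play the role of the off-diagonal matrix units $E_{ij}$ of $\sl_r$.

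For Part~(1), I would next compute $[\tilde D_{ij},\tilde D_{ji}]$: using $\sum_{k=1}^m e_k=0$ (Remark~\ref{ur}), this is a degree-zero derivation with linear part a $K_0^*$-multiple of $\rho_i-\rho_j$. These degree-zero derivations span an $(r-1)$-dimensional Cartan subalgebra, and the Chevalley--Serre relations of $\sl_r$ are then verified by direct substitution into the bracket formula above. After a harmless $K_0^*$-rescaling of generators one obtains the isomorphism $\LIE(\DD)\cong\sl_r(K_0)$. For Part~(2), I would use Part~(1) to view $\Lie(\DD)$ as a $\KK$-Lie subalgebra of $\sl_r(K_0)$. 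Each $\tilde D_{ij}$ carries a coefficient which is, up to sign, a product $\phi_i\phi_{i+1}\cdots\phi_{j-1}$, and each Cartan element $[\tilde D_{ij},\tilde D_{ji}]$ carries the product of $\phi$'s around a closed cycle. The very-simple hypothesis asserts exactly that every such cyclic product belongs to $\KK$. Composing the isomorphism of Part~(1) with a suitable diagonal (Cartan) automorphism of $\sl_r(K_0)$, chosen consistently along a spanning tree of the cycle graph of $\DD$, sends each $D_i$ to a $\KK^*$-multiple of a matrix unit, so the $\KK$-subalgebra generated coincides with the standard $\sl_r(\KK)\subseteq\sl_r(K_0)$.

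The main obstacle is combinatorial: when $m>r$ the cyclic sequence $\rho_1,\ldots,\rho_m$ revisits some rays, and one must check that the elements $\tilde D_{ij}$ are unambiguously defined up to $K_0^*$-scalars independent of the chain of commutators used to build them, and that no extra relations beyond those of $\sl_r$ appear. The almost-simple hypothesis together with the non-proportionality of the $D_i$'s sharply constrains the admissible configurations, but each potentially ambiguous chain of commutators has to be traced. In Part~(2), the reason one must impose the very-simple condition on \emph{every} cyclic subset, rather than merely on the full $\DD$, is precisely that each sub-cycle that appears in the commutator computations contributes its own $\phi$-product, and all of these must lie in $\KK$ for the diagonal-gauge argument to close up.
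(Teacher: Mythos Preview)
Your approach is valid in outline but takes a different route from the paper, and the two differ most significantly in Part~(1).

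The paper does not verify Chevalley--Serre relations by computing iterated brackets. Instead it changes coordinates: letting $\kappa_1,\ldots,\kappa_r$ be the distinct rays among $\rho_1,\ldots,\rho_m$, it extends them (or $\kappa_1,\ldots,\kappa_{r-1}$ in the linearly dependent case, where the unique relation is $\sum\kappa_j=0$) to a basis of a finite-index sublattice $N'\subseteq N$, passes to the overfield $K_0(M')$, and identifies it with $K_0(x_1,\ldots,x_r,y_{r+1},\ldots,y_n)$. Under this identification each $D_s$ becomes literally $x_j\,\partial/\partial x_i$ where $\kappa_i=\rho_s$ and $\kappa_j=\rho_{s+1}$, so $\LIE(\DD)$ is visibly $\sl_r(K_0)$ acting by its standard representation. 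This bypasses entirely the combinatorial obstacle you flag: there is no need to trace chains of commutators or worry about ambiguities when $m>r$, because each $D_s$ is already a matrix unit. Your claim that $[D_i,[D_{i+1},\ldots,D_{j-1}]\ldots]$ always has linear part a $K_0^*$-multiple of $\rho_i$ is in fact not correct as stated: when $\rho_i=\rho_{i+2}$ (which happens precisely when $\{D_i,D_{i+1}\}$ is already a $2$-cycle in $\DD$), the bracket $[D_i,D_{i+1}]$ has linear part proportional to $\rho_{i+1}-\rho_i$, a Cartan-type element. Your programme can be repaired by treating such repetitions separately, but the paper's coordinate realization makes all of this moot.

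For Part~(2) the two arguments are essentially the same idea in different language. Your ``diagonal automorphism chosen consistently along a spanning tree'' is exactly the paper's construction of a \emph{potential} $\psi\colon\{1,\ldots,r\}\to K_0^\times$ for the edge function $\phi'([ij])=\phi_{ij}$; the paper packages the well-definedness check as a small graph-theoretic lemma (extending a partial marking on a subset $E'\subseteq E$ containing a Hamiltonian cycle to a marking on all of $\Gamma_r$), after which the automorphism $x_l\mapsto\psi(l)x_l$ sends each $\phi_{ij}\partial_{ij}$ to $\partial_{ij}$ and the conclusion $\Lie(\DD)\cong\sl_r(\KK)$ is immediate.
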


In order to prove this proposition, we need an auxiliary result from
graph theory proven below in Lemma~\ref{elem}. Let $K_0$ be a field
and $\Gamma_r$ be a complete oriented graph on the set of vertices
$V=\{1,\ldots,r\}$. Assume that we are given a function
$\phi\colon E\to K_0^{\times}$, where $E$ is the set of edges of
$\Gamma_r$. For any oriented path $P$ in $\Gamma_r$ we let $\phi(P)$
be the product of $\phi(e)$ over all edges $e$ in $P$. By a
\emph{marking} on $\Gamma$ we mean a function $\phi$ such that
$\phi(C)=1$ for every oriented cycle $C$ in $\Gamma_r$.

We say that a function $\psi\colon V\to K_0^{\times}$ is a
\emph{potential} of a function $\phi$ if
$\phi([jk])=\psi(k)\psi(j)^{-1}$ for every edge $[jk]\in E$. It is
easy to see that a function $\phi$ admits a potential if and only if
$\phi$ is a marking.

Let $E'$ be a subset in $E$ such that there exists an oriented cycle
in $\Gamma_r$ passing through all vertices in $\Gamma_r$ whose set of
edged is contained in $E'$. A function $\phi'\colon E'\to K_0^{\times}$
such that $\phi'(C')=1$ for every oriented cycle $C'$ on $E'$ is said
to be a \emph{partial marking} of $\Gamma_r$.

\begin{lemma} \label{elem} Every partial marking $\phi'$ of the graph
  $\Gamma_r$ can be extended to a marking $\phi$ of $\Gamma_r$.
\end{lemma}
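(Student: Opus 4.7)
The plan is to use the characterization that a function on $E$ is a marking if and only if it admits a potential, and to construct the potential from the given Hamiltonian oriented cycle lying in $E'$.

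First I would pick an oriented cycle $C_0$ with edges in $E'$ that passes through every vertex of $\Gamma_r$. After reindexing, write $C_0$ as $1\to 2\to 3\to\cdots\to r\to 1$. Then I would define $\psi\colon V\to K_0^{\times}$ by setting $\psi(1)=1$ and
$$\psi(k)=\phi'([1,2])\,\phi'([2,3])\cdots\phi'([k-1,k])\qquad \text{for } k=2,\ldots,r.$$
The closing-up condition $\psi(1)=\psi(r)\,\phi'([r,1])$ is exactly $\phi'(C_0)=1$, which holds by hypothesis since $C_0$ is an oriented cycle on $E'$. So $\psi$ is well-defined.

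Next I would verify that $\phi'([jk])=\psi(k)\psi(j)^{-1}$ for every edge $[jk]\in E'$, not only for the edges of $C_0$. Given such an edge, consider the oriented path $P_{k\to j}$ inside $C_0$ going from $k$ to $j$. Concatenating $[jk]$ with $P_{k\to j}$ yields an oriented cycle $C'$ whose edges all lie in $E'$, so the partial marking hypothesis gives $\phi'(C')=\phi'([jk])\,\phi'(P_{k\to j})=1$. The product along $P_{k\to j}$ telescopes to $\psi(j)\psi(k)^{-1}$ by the definition of $\psi$, so $\phi'([jk])=\psi(k)\psi(j)^{-1}$, as required.

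Finally, I would define $\phi\colon E\to K_0^{\times}$ by $\phi([jk]):=\psi(k)\psi(j)^{-1}$ for every edge of $\Gamma_r$; by the previous step this agrees with $\phi'$ on $E'$. Since $\phi$ admits the potential $\psi$, it is a marking by the equivalence recorded just before the lemma, because the product along any oriented cycle telescopes to $1$. The main obstacle is Step~2: checking that the potential read off from the single cycle $C_0$ is automatically compatible with every other edge of $E'$; this is precisely where one must exploit the assumption that $\phi'$ evaluates to $1$ on \emph{every} oriented cycle in $E'$ (not just on $C_0$), by forming, for each extra edge, the cycle it completes with the appropriate arc of $C_0$.
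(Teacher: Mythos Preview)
Your proof is correct and takes a somewhat different route from the paper's. The paper does not pass through a potential: for each edge $[jk]\in E$ it sets $\phi([jk]):=\phi'(P)$ for \emph{any} oriented path $P$ from $j$ to $k$ in $E'$, checks that this is independent of the choice of $P$ via the cycle condition on $E'$, and then verifies the marking property by replacing every edge of an arbitrary cycle $C$ in $\Gamma_r$ by a path in $E'$, producing a closed walk $C'$ in $E'$ with $\phi(C)=\phi'(C')=1$. You instead exploit the equivalence ``marking $\Leftrightarrow$ admits a potential'' stated just before the lemma: you read a potential $\psi$ off the Hamiltonian cycle $C_0\subseteq E'$, verify that $\psi$ is compatible with every edge of $E'$ by closing each such edge up with an arc of $C_0$, and then the extension via $\psi$ is automatically a marking. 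Your version makes the role of the spanning cycle $C_0$ more explicit and sidesteps the (minor) issue that the paper's substituted $C'$ is a closed walk rather than a simple cycle; the paper's version, on the other hand, never needs to single out a base vertex or a particular Hamiltonian cycle. The two arguments are of comparable length and ultimately equivalent.
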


\begin{proof}
  For every edge $u=[jk]\in E$ we can find an oriented path $P_1$ from
  $j$ to $k$ in $E'$ and we let $\phi(u)=\phi(P_1)$. This function is
  well defined. Indeed, let $P_2$ be another oriented path from $j$ to
  $k$ in $E'$ and $P_3$ be an oriented path from $k$ to $j$ in
  $E'$. Then
$$
\phi(P_1)=\phi(P_3)^{-1}=\phi(P_2).
$$

Let us check that $\phi$ is a marking. Let $C$ be an oriented cycle in
$\Gamma_r$. We replace every edge $[jk]$ of $C$ which is not contained
in $E'$ by an oriented path $P$ from $j$ to $k$ in $E'$. We obtain an
oriented cycle $C'$ with $\phi(C)=\phi(C')$ and all edges of $C'$ are
in $E'$.  By definition of a partial marking we have $\phi(C')=1$.
\end{proof}

\begin{proof}[Proof of Proposition~\ref{propsl}]
We begin with the proof of (1). For our purposes we may assume that all functions $\phi_i$ are constants $1$. Let $\kappa_1,\ldots,\kappa_r$ be all pairwise distinct elements in the set $\{\rho_1,\ldots,\rho_m\}$. Assume first that the collection $\kappa_1,\ldots,\kappa_r$ is linearly independent in $N_\QQ$. Let us extend the collection $\kappa_1,\ldots,\kappa_r$ to a basis $\beta=\{\kappa_1,\ldots,\kappa_r,\tau_{r+1}\ldots,\tau_n\}$ of a sublattice $N'$ of finite index in $N$. The dual lattice $M':=\Hom(N',\ZZ)$ contains $M$ as a sublattice of finite index. Every $D_i$ extends to a $K_0$-derivation of the field $K_0(M')$. Using our basis $\beta$ and its dual basis $\beta^*$, we identify $K_0(M')$ with the field of rational functions $K_0(x_1,\ldots,x_r,y_{r+1},\ldots,y_n)$ where $x_i=\chi^{\kappa^*_i}$ and $y_i=\chi^{\tau^*_i}$ with $\kappa^*_i$ and $\tau^*_i$ the respective basis element dual to $\kappa_i$ and $\tau_i$ in $\beta^*$. Under this identification, the derivation $D_s$ from $\DD$ coincides with the derivation $\partial_{ji}:=x_j\frac{\partial}{\partial x_i}$, where $\kappa_i=\rho_s$ and $\kappa_j=\rho_{s+1}$. This shows that the algebra $\LIE(\DD)$ acts on the subalgebra $K_0[x_1,\ldots,x_r]$ via the standard representation of the algebra $\sl_{\,r}(K_0)$. In particular, $\LIE(\DD)$ is isomorphic to $\sl_{\,r}(K_0)$.

Assume now that the collection $\kappa_1,\ldots,\kappa_r$ is linearly dependent in $N_\QQ$. Then let $\lambda_1\kappa_1+\ldots+\lambda_r\kappa_r=0$ be a linear combination with $\lambda_i\in\QQ$. Computing the value of all $e_i$ on this combination, we obtain $\lambda_1=\ldots=\lambda_r$. Hence, up to a constant factor, the only linear relation is $\kappa_1+\ldots+\kappa_r=0$.

In particular, $\kappa_1,\ldots,\kappa_{r-1}$ are linearly independent. Let us extend the collection $\kappa_1,\ldots,\kappa_{r-1}$ to a basis $\beta=\{\kappa_1,\ldots,\kappa_{r-1},\tau_r\ldots,\tau_n\}$ of
a sublattice $N'$ of finite index in $N$. The dual lattice $M':=\Hom(N',\ZZ)$ contains $M$ as a sublattice of finite index. Every $D_i$ extends to a $K_0$-derivation of the field $K_0(M')$. Using our basis $\beta$ and its dual basis $\beta^*$, we identify $K_0(M')$ with the field of rational functions $K_0(x_1,\ldots,x_{r-1},y_{r+1},\ldots,y_{n+1})$ where $x_i=\chi^{\kappa^*_i}$ and $y_i=\chi^{\tau^*_i}$ with $\kappa^*_i$ and $\tau^*_i$ the respective basis element dual to $\kappa_i$ and $\tau_i$ in $\beta^*$. Let us embed the field $K_0(x_1,\ldots,x_{r-1},y_{r+1},\ldots,y_{n+1})$ into the field $K_0(z_1,\ldots,z_{r-1},z_r,y_{r+1},\ldots,y_{n+1})$ sending $x_i$ to $\frac{z_i}{z_r}$. Then the derivation $D_s$ from $\DD$ coincides with the restriction to $K_0(x_1,\ldots,x_{r-1},y_{r+1},\ldots,y_{n+1})$ of the derivation $z_j\frac{\partial}{\partial z_i}$
of the field $K_0(z_1,\ldots,z_r,y_{r+1},\ldots,y_{n+1})$, where $\kappa_i=\rho_s$ and $\kappa_j=\rho_{s+1}$. This shows that the algebra $\LIE(\DD)$ acts on the subalgebra $K_0[z_1,\ldots,z_r]$ via the standard representation of the algebra $\sl_{\,r}(K_0)$. In particular, $\LIE(\DD)$ is isomorphic to $\sl_{\,r}(K_0)$.

\medskip

Now we come to (2). This time we realize the $D_s$ as the derivation $\phi_s\partial_{ij}$ of field $K_0(x_1,\ldots,x_r)$. It~follows from the definition of an almost simple set that the indices $i,j$ determine the index $s$ uniquely. Let us denote the function $\phi_s$ as $\phi_{ij}$.

Consider a complete oriented graph $\Gamma_r$ on the set of vertices $V=\{1,\ldots,r\}$. Let $E'$ be the set of $m$ edges of $\Gamma_r$ corresponding to pairs $[ij]$ defined by the derivations $D_1,\ldots,D_m$. Consider a function
$$
\phi'\colon E'\to K_0^{\times}, \quad \phi'([ij])=\phi_{ij}.
$$
By definition of a very simple set, this function is a partial marking on the graph $\Gamma_r$.  By Lemma~\ref{elem}, such a partial marking can be extended to a marking $\phi\colon E\to K_0^{\times}$. In turn, this marking admits a potential $\psi\colon V\to K_0^{\times}$.

This shows that the derivations $\phi_{ij}\partial_{ij}$ are sent to the derivations $\partial_{ij}$ via an automorphism of the field $K_0(x_1,\ldots,x_r)$ given by $x_l\mapsto\psi(l)x_l$, $l=1,\ldots,r$. Thus the algebra $\Lie(\DD)$ is isomorphic to the $\KK$-subalgebra of $\Der(K_0(x_1,\ldots,x_r))$ generated by $\partial_{ij}$, $1\le i\ne j\le r$.  The latter algebra is obviously isomorphic to $\sl_{\,r}(\KK)$. This completes the proof of Proposition~\ref{propsl}.
\end{proof}

\begin{example}
Consider the fan $\Sigma$ in the lattice $\ZZ^2$ consisting of three two-dimensional cones generated by primitive vectors $\kappa_1=(1,0)$, $\kappa_2=(0,1)$, $\kappa_3=(-1,-1)$ with all their faces. We define a very simple set $\DD=\{D_1,\ldots,D_6\}$ by letting $\phi_1=\ldots=\phi_6=1$,
$$
e_1=(-1,1), \quad e_2=(0,-1), \quad e_3=(1,0), \quad e_4=(-1,0), \quad e_5=(0,1), \quad e_6=(1,-1),
$$
and
$$
\rho_1=\kappa_1, \quad \rho_2=\kappa_2, \quad \rho_3=\kappa_3, \quad \rho_4=\kappa_1, \quad \rho_5=\kappa_3, \quad\rho_6=\kappa_2.
$$
The corresponding derivations of the field $K_0(x_1,x_2)$ have the form
$$
x_2\frac{\partial}{\partial x_1}, \quad \frac{\partial}{\partial x_2}, \quad -x_1^2\frac{\partial}{\partial x_1}-x_1x_2\frac{\partial}{\partial x_2}, \quad \frac{\partial}{\partial x_1}, \quad -x_1x_2\frac{\partial}{\partial x_1}-x_2^2\frac{\partial}{\partial x_2}, \quad x_1\frac{\partial}{\partial x_2}.
$$
If we embed the field $K_0(x_1,x_2)$ into the field $K_0(z_1,z_2,z_3)$ by sending $x_i$ to $\frac{z_i}{z_3}$, $i=1,2$, these derivations coincide with restrictions of the derivations
$$
z_2\frac{\partial}{\partial z_1},\quad z_3\frac{\partial}{\partial z_2},\quad z_1\frac{\partial}{\partial z_3},\quad z_3\frac{\partial}{\partial z_1},\quad z_2\frac{\partial}{\partial z_3},\quad z_1\frac{\partial}{\partial z_2}.
$$
The algebra $\LIE(\DD)$ is isomorphic to $\sl_{\,3}(K_0)$. Moreover, we have constructed
a representation of the Lie algebra $\sl_{\,3}(K_0)$ into $\Der(K_0[x_1,x_2])$.

\end{example}


\section{A criterion for finite-dimensionality} \label{s5}

In this section we give a criterion for a Lie algebra generated by a
finite set of root derivations to be finite dimensional.

\begin{theorem} \label{findim} %
  Let $\DD=\{D_{\phi_i,\rho_i,e_i}\}_{i=1}^m$ be a set of root
  derivations of the field $K_0(M)$.
  \begin{enumerate}[(1)]
  \item The Lie algebra $\LIE(\DD)$ is finite dimensional if and
    only if every cyclic subset of $\DD$ is almost simple.
  \item The Lie algebra $\Lie(\DD)$ is finite dimensional if and only
    if every cyclic subset of $\DD$ is simple.
\end{enumerate}
\end{theorem}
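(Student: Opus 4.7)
The plan is to exploit the natural $M$-grading on $\Der_{K_0}(K_0(M))$ in which a $K_0$-derivation $D$ is homogeneous of degree $f\in M$ whenever $D(\chi^u)\in K_0\cdot\chi^{u+f}$ for every $u\in M$. Each generator $D_i$ is homogeneous of degree $e_i$, so both $\LIE(\DD)$ and $\Lie(\DD)$ inherit $M$-gradings. A direct calculation yields
\[
[D_{\phi_1,\rho_1,e_1},D_{\phi_2,\rho_2,e_2}]=\phi_1\phi_2\bigl(\rho_1(e_2)\,D_{1,\rho_2,e_1+e_2}-\rho_2(e_1)\,D_{1,\rho_1,e_1+e_2}\bigr),
\]
where $D_{1,\rho,f}(\chi^u):=\rho(u)\chi^{u+f}$ for arbitrary $\rho\in N_\QQ$ and $f\in M$. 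By induction, every iterated bracket is homogeneous of the form $\psi\cdot\chi^fL_\rho$ with $\rho\in N_\QQ$, so each graded piece of $\LIE(\DD)$ embeds into a $K_0$-space of dimension at most $n=\rk(M)$; in particular $\LIE(\DD)$ is finite-dimensional over $K_0$ if and only if only finitely many $M$-degrees occur in it.

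For the necessity in~(1), suppose some cyclic subset $C\subseteq\DD$ is not almost simple, so that by Remark~\ref{ur} we have $E:=\sum_{D\in C}e(D)\neq 0$. I would check by induction on $|C|$, using the commutator formula, that the iterated bracket along the cyclic order of $C$ is nonzero---the key point is that a specific summand has coefficient a positive product of the integers $\rho_{j+1}(e_j)>0$ that cannot be cancelled by the remaining Demazure-type contributions. Iterating by traversing $C$ a total of $N$ times then produces nonzero homogeneous elements of degree $NE$ for every $N\ge 1$, giving infinitely many distinct graded components. For the necessity in~(2), either some cyclic subset is already not almost simple (reducing to the above), or some cyclic subset $C$ is almost simple with $\psi_C:=\prod_{D\in C}\phi(D)\notin\KK$; then the iterated bracket along $C$ is a nonzero element of $\Lie(\DD)_0$ whose $K_0$-coefficient has $\psi_C$ as a factor, and further iteration produces elements with coefficients $\psi_C^N$ for all $N$, which are $\KK$-linearly independent in $K_0$.

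For the sufficiency in~(1), the plan is to introduce the directed graph $G$ on $\{1,\ldots,m\}$ with an arrow $i\to j$ whenever $\rho_j(e_i)>0$, so that cyclic subsets of $\DD$ correspond to the simple directed cycles in $G$. Using the commutator formula and the Jacobi identity, every homogeneous element of $\LIE(\DD)$ can be expanded as a $K_0$-linear combination of iterated brackets indexed by walks in $G$. Each walk decomposes into a simple path plus simple cycles, and by hypothesis every cycle contributes the zero vector to the $M$-degree. Since $G$ admits only finitely many simple paths, only finitely many $M$-degrees occur, and combined with the $K_0$-dimension bound above this gives $\dim_{K_0}\LIE(\DD)<\infty$.

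For the sufficiency in~(2), part~(1) already provides finite $K_0$-dimension and finitely many nonzero graded pieces, so it suffices to control the $\KK$-span of the $K_0$-coefficients in each graded piece. The coefficient arising from an iterated bracket is $\prod_j\phi_j^{k_j}$ with $\sum_jk_je_j=f$ fixed; two valid multi-indices $(k_j),(k_j')$ differ by an element of $\ker(\ZZ^m\to M,\,(k_j)\mapsto\sum k_je_j)$, which the plan identifies, under the almost-simple hypothesis, with the $\ZZ$-span of the indicator vectors of cyclic subsets. The simple hypothesis $\prod_{D\in C}\phi(D)\in\KK$ then forces the ratio of the two $\phi$-products to lie in $\KK^\times$, so the $K_0$-coefficients occurring in $\Lie(\DD)_f$ span a finite-dimensional $\KK$-subspace of $K_0$, and we conclude $\dim_\KK\Lie(\DD)<\infty$. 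The main obstacle is the combinatorial walk-decomposition in~(1) together with the non-vanishing of the iterated cyclic bracket; once these combinatorial statements are in hand, the remaining Lie-algebraic content follows cleanly from the commutator formula and the dimension bound on the graded components.
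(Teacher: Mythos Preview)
Your approach to sufficiency via a direct bound on the set of $M$-degrees differs from the paper's structural argument (which shows that the elements contained in no cyclic subset generate a finite-dimensional nilpotent algebra, that the elements lying in cyclic subsets generate a direct sum $\bigoplus_i\sl_{r_i}$, and that the whole algebra is a semidirect product of these two pieces). Your walk-decomposition step, however, fails. Take $N=\ZZ^3$ with $\Sigma(1)=\{\rho_1,\rho_2,\rho_3\}$ the standard basis, and $e_1=(-1,1,1)$, $e_2=(0,-1,0)$, $e_3=(0,0,-1)$. Your graph $G$ has only the arrows $1\to 2$ and $1\to 3$, hence no directed walk on three vertices; yet $[[D_1,D_2],D_3]$ is a nonzero root derivation of degree $e_1+e_2+e_3$, so it cannot be a $K_0$-combination of walk-indexed brackets of the same degree. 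The reason is that $[D_1,D_2]$ is associated with $\rho_1$ (not $\rho_2$), and then $\rho_3(e_1+e_2)>0$ makes the next bracket nonzero even though $G$ has no arrow $2\to 3$. The paper handles the acyclic case instead by showing every nonzero iterated bracket is a root derivation whose vector $(\langle\kappa_1,e\rangle,\ldots,\langle\kappa_r,e\rangle)$ is ``appropriate'' (first nonzero entry equal to $-1$, later entries nonnegative) and that only finitely many nonnegative integer combinations of appropriate vectors are appropriate.

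For necessity in~(1) you correctly isolate the non-vanishing of the iterated cyclic bracket as the crux, but ``a specific summand \ldots\ cannot be cancelled'' is not an argument, and the paper does not try to prove this statement. Instead it reduces to a cyclic set with no proper cyclic subset, and for $m=2$ replaces $D_2$ by $D_2'=[\ldots[[D_1,D_2],D_2],\ldots,D_2]$ (with $\langle\rho_2,e_1\rangle+1$ copies of $D_2$), verifies that $D_2'$ is again a root derivation associated with $\rho_2$, that $\{D_1,D_2'\}$ is again cyclic and not almost simple, and that $\langle\overline{\rho},\deg D_2'\rangle>\langle\overline{\rho},\deg D_2\rangle$ where $\overline{\rho}=\sum_{\rho\in\Sigma(1)}\rho$; iterating yields infinitely many distinct degrees. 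For $m>2$ one collapses to $m-1$ by replacing $D_{m-1},D_m$ with their bracket. Finally, your kernel identification in~(2) is unjustified: nothing forces every relation $\sum k_je_j=0$ arising from nonzero brackets to lie in the $\ZZ$-span of indicator vectors of cyclic subsets. The paper avoids this by using a potential on a complete graph (Lemma~\ref{elem}) to rescale $x_l\mapsto\psi(l)x_l$ so that every $\phi_i$ becomes $1$, reducing the $\KK$-statement to the $K_0$-statement already handled.
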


In the remaining of this section we prove Theorem~\ref{findim}
in several steps. We begin with a commutator formula. The proof
of the following lemma is straightforward.

\begin{lemma} \label{lemma} %
  Let $D_i=D_{\phi_i,\rho_i,e_i}$ and $D_j=D_{\phi_j,\rho_j,e_j}$ be
  root derivations. Then the commutator is given by
   $$[D_i,D_j](\chi^u)=\phi_i\phi_j(\langle \rho_i,e_j\rangle\langle
   \rho_j,u\rangle - \langle \rho_i,u\rangle\langle
   \rho_j,e_i\rangle)\chi^{u+e_i+e_j}\quad\mbox{for all}\quad u\in M.$$
\end{lemma}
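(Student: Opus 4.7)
The plan is a direct calculation unpacking the definition of root derivation $D_{\phi,\rho,e}(\chi^u)=\phi\cdot\langle\rho,u\rangle\cdot\chi^{u+e}$ and exploiting that each $D_i$ is a $K_0$-derivation (so it kills the scalar factors $\phi_j\in K_0$ and treats $\langle\rho_j,u\rangle\in\ZZ\subseteq K_0$ as a constant).

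First I would compute $D_iD_j(\chi^u)$. Applying $D_j$ gives $\phi_j\langle\rho_j,u\rangle\,\chi^{u+e_j}$; then applying $D_i$, the factor $\phi_j\langle\rho_j,u\rangle$ is $K_0$-linear and comes out, yielding
\[
D_iD_j(\chi^u)=\phi_i\phi_j\,\langle\rho_j,u\rangle\,\langle\rho_i,u+e_j\rangle\,\chi^{u+e_i+e_j}.
\]
By the same computation with $i$ and $j$ swapped,
\[
D_jD_i(\chi^u)=\phi_i\phi_j\,\langle\rho_i,u\rangle\,\langle\rho_j,u+e_i\rangle\,\chi^{u+e_i+e_j}.
\]

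Next, subtracting and using the additivity $\langle\rho_i,u+e_j\rangle=\langle\rho_i,u\rangle+\langle\rho_i,e_j\rangle$ (and symmetrically for the other term), the products $\langle\rho_i,u\rangle\langle\rho_j,u\rangle$ cancel, leaving exactly
\[
[D_i,D_j](\chi^u)=\phi_i\phi_j\bigl(\langle\rho_i,e_j\rangle\langle\rho_j,u\rangle-\langle\rho_i,u\rangle\langle\rho_j,e_i\rangle\bigr)\chi^{u+e_i+e_j},
\]
as claimed.

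Finally, a short justificatory remark: it suffices to verify the formula on the generators $\chi^u$, $u\in M$, because both sides are $K_0$-derivations of $K_0(M)$ and the field extension $K_0\subseteq K_0(M)$ is generated by these elements. So the whole proof amounts to the two routine applications of the definition above together with bilinearity of the pairing $\langle\cdot,\cdot\rangle$; there is no real obstacle, which is why the authors describe the proof as straightforward.
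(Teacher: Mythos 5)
Your computation is correct and is exactly the routine verification the authors have in mind when they call the proof ``straightforward'' (the paper omits it entirely): expand $D_iD_j(\chi^u)$ and $D_jD_i(\chi^u)$ using $D_{\phi,\rho,e}(\chi^u)=\phi\,\langle\rho,u\rangle\,\chi^{u+e}$ and the fact that the coefficients lie in $K_0$ and are killed by a $K_0$-derivation, then cancel the $\langle\rho_i,u\rangle\langle\rho_j,u\rangle$ terms via bilinearity. Your closing remark that checking on the generators $\chi^u$ suffices is also the right justification, so nothing is missing.
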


Let us start with the ``only if'' direction in (1). We will prove that
if $\DD$ contains a cyclic subset that is not almost simple then
$\LIE(\DD)$ is infinite dimensional.  Without loss of generality we
may assume that
\begin{enumerate}[(a)]
\item the set $\DD$ is cyclic but not almost simple;
\item the set $\DD$ contains no proper cyclic subset.
\end{enumerate}

Condition (b) implies that $\langle \rho_i,e_j\rangle=0$ for all
$j\ne i-1,i$. In particular, the rays $\rho(D), D\in\DD$, are pairwise
distinct. Let us denote these rays by
$\rho_1,\ldots,\rho_m$. Moreover, we have
$\langle\rho_i,\sum_{j=1}^m e_j\rangle=\langle
\rho_i,e_{i-1}\rangle-1$. Hence, under condition (b) we have that a
set
\begin{align}\label{ex-remark}
\DD \mbox{ is almost simple if and only if }\DD\mbox{ is cyclic and }
\langle\rho_i,\sum_{j=1}^m e_j\rangle=0\mbox{, for all }
  i\in\{1,\ldots,m\}\,.
\end{align}

We proceed by induction on $m$. Let $m=2$ and
$\DD=\{D_1,D_2\}$. We put $a=\langle \rho_2,e_1\rangle$. Since $\DD$ is
not almost simple, we may assume up to renumbering that either
$a\ge 2$ or $a=1=\langle \rho_1,e_2\rangle$ and $\langle \rho_3,e_2\rangle>0$ for some
$\rho_3\in\Sigma(1)$, $\rho_3\ne\rho_1$. Consider the derivation
\begin{align*}
		D_2' = \underbrace{[\dots [[D_1, D_2],D_2]\dots D_2]}_{a + 1}.
\end{align*}

\smallskip

{\it Claim 1.}\ The derivation $D_2'$ is non-zero.
\begin{proof}
Take a vector $u\in M$ with $\langle \rho_2,u\rangle=1$. We claim that
$D_2'(\chi^u)\ne 0$. Note that $D_2^2(\chi^u)=0$ and we have
\begin{align*}
(-1)^{a+1}D_2'(\chi^u)&=D_2^{a+1}D_1(\chi^u)-(a+1)D_2^aD_1D_2(\chi^u) \\
              &=\phi_1\phi_2(\langle \rho_1,u\rangle\langle
                \rho_2,u+e_1\rangle-(a+1)\langle \rho_2,u\rangle\langle \rho_1,u+e_2\rangle D_2^a(\chi^{u+e_1+e_2}) \\
              &=\phi_1\phi_2(\langle \rho_1,u\rangle (1+a)-(a+1)(\langle
                \rho_1,u\rangle+\langle
                \rho_1,e_2\rangle))D_2^a(\chi^{u+e_1+e_2}) \\
              &=-\phi_1\phi_2(a+1)\langle \rho_1,e_2\rangle D_2^a(\chi^{u+e_1+e_2}).
\end{align*}
Furthermore, we have
\begin{align*}
D_2^a(\chi^{u+e_1+e_2})&=\phi_2^a\langle \rho_2,u+e_1+e_2\rangle \langle \rho_2,u+e_1+2e_2\rangle\ldots\langle \rho_2,u+e_1+ae_2\rangle
\chi^{u+e_1+(a+1)e_2} \\
&=\phi_2^a(1+a-1)(1+a-2)\ldots(1+a-a)\chi^{u+e_1+(a+1)e_2}\neq 0.
\end{align*}
This shows  $D_2'(\chi^u)\neq 0$.
\end{proof}

{\it Claim 2.}\ The derivation $D_2'$ is a root derivation associated
with the ray $\rho_2$.
\begin{proof}
The derivation $D_2'$ is homogeneous of degree $e_1+(a+1)e_2$. We have
$$
\langle\rho_1,e_1+(a+1)e_2\rangle=-1+(a+1)\langle\rho_1,e_2\rangle\ge\langle\rho_1,e_2\rangle, \quad
\langle\rho_2,e_1+(a+1)e_2\rangle=-1, \quad \text{and}
$$
$$
\langle\rho',e_1+(a+1)e_2\rangle=\langle\rho',e_1\rangle+(a+1)\langle\rho',e_2\rangle\ge\langle\rho',e_2\rangle
$$
for all $\rho'\in\Sigma(1)\setminus\{\rho_1,\rho_2\}$. Thus the vector $e_1+(a+1)e_2$ is a Demazure root of the fan $\Sigma$ associated with the ray $\rho_2$.

Take a vector $w\in M$ with $\langle\rho_2,w\rangle=0$. Then $D_2(\chi^w)=0$ and the condition $\langle\rho_2,w+e_1+ae_2\rangle=a-a=0$ implies $D_2(D_2^aD_1(\chi^w))=0$. We conclude that $D_2'(\chi^w)=0$ and thus
$$
D_2'(\chi^u)=\phi\langle\rho_2,u\rangle\chi^{u+e_1+(a+1)e_2}
$$
for all $u\in M$ with some $\phi\in K_0$.
\end{proof}

{\it Claim 3.}\ We have $\langle\overline{\rho},\deg D_2'\rangle>\langle\overline{\rho},\deg D_2\rangle$, where $\overline{\rho}:=\sum_{\rho\in\Sigma(1)}\rho$.
\begin{proof}
Take $\rho\in\Sigma(1)$. The inequality
$$
\langle\rho,e_1+(a+1)e_2\rangle=\langle\rho,e_1\rangle+a\langle\rho,e_2\rangle+\langle\rho,e_2\rangle\ge\langle\rho,e_2\rangle
$$
is strict for $\rho=\rho_1$ if $a>1$ and is strict for $\rho=\rho_3$ if $a=1$.
\end{proof}

{\it Claim 4.}\ The set $\DD':=\{D_1,D_2'\}$ is again cyclic, but not
almost simple.
\begin{proof}
For the first assertion, we have
$$
\langle\rho_1,e_1+(a+1)e_2\rangle=-1+(a+1)\langle\rho_1,e_2\rangle >0.
$$
If $a>1$ then $\langle \rho_2,e_1\rangle>1$ and $\DD'$ is not almost simple. If $a=1$ and
$\langle \rho_3,e_2\rangle>0$ then
$$
\langle \rho_3,e_1+(a+1)e_2\rangle>\langle \rho_3,e_2\rangle>0
$$
and again $\DD'$ is not almost simple.
\end{proof}

Repeating this procedure with the pair $\{D_1,D_2'\}$ and so on, we obtain infinitely many root derivations having different degrees by Claim 3. This proves that the Lie algebra $\LIE(\DD)$ has
infinite dimension.

\smallskip

Now assume that $m>2$. Replace the set $\DD=\{D_1,\ldots,D_m\}$ by the
set
$$
\widehat{\DD}=\{D_1,\ldots,D_{m-2},\widehat{D}_{m-1}\},
$$
where $\widehat{D}_{m-1}=[D_{m-1},D_m]$. We claim that $\widehat{\DD}$
is again cyclic and not almost simple. Indeed, we have
$\langle \rho_{m-1},e_{m-1}+e_m\rangle=\langle \rho_{m-1},e_{m-1}\rangle
=-1$.  Take $u\in M$ with $\langle \rho_m,u\rangle=0$ and
$\langle \rho_{m-1},u\rangle >0$. By Lemma~\ref{lemma}, we have
$$
\widehat{D}_{m-1}(\chi^u)=-\phi_1\phi_2\langle \rho_{m-1},u\rangle\langle
\rho_m,e_{m-1}\rangle\chi^{u+e_{m-1}+e_m}\ne 0.
$$
Moreover, the derivation $\widehat{D}_{m-1}$ annihilates all functions
$\chi^u$ with $\langle\rho_{m-1},u\rangle=0$, and thus it is a nonzero
root derivation of degree $e_{m-1}+e_m$. Since
$\langle \rho_1, e_{m-1}+e_m\rangle=\langle \rho_1, e_m\rangle>0$ we
have that $\widehat{\DD}$ is cyclic. By Lemma~\ref{lemma}, we obtain
that $\rho(\widehat{D}_{m-1})=\rho(D_{m})$ and since the sum of degrees of
derivations from $\DD$ and $\widehat{\DD}$ are equal, we conclude that
$\widehat{\DD}$ is not almost simple by \eqref{ex-remark}. By the
induction hypothesis, the Lie algebra generated by $\widehat{\DD}$ is
infinite dimensional. Thus the Lie algebra $\LIE(\DD)$ is infinite
dimensional as well.

\bigskip
	
We proceed with the ``only if'' direction in (2). Assume that
the algebra $\Lie(\DD)$ is finite dimensional. We already know that
every cyclic subset in $\DD$ is almost simple. Suppose that there is an
almost simple subset $\{D_1,\ldots,D_m\}$ in $\DD$ which is not
simple. Since all derivations $D_i$ are elementary, we can take a
finite extension $M\subseteq M'$ of lattices such that
$K_0(M')\cong K_0(x_1,\ldots,x_n)$ and every derivation $D_i$ acts on
this field as $\phi_i\partial_{ji}$, where
$\partial_{ji}=x_j\frac{\partial}{\partial x_i}$; see the proof of
Proposition~\ref{propsl}. Consider the derivation
$$
D_{1,1}:=[\ldots [[D_1, D_2], D_3] \dots D_m],D_1]=(\phi_1\ldots\phi_m)D_1.
$$
Similarly, the derivation $D_{1,s}$ defined recursively via
$D_{1,s}:=[\ldots [[D_{1,s-1}, D_2], D_3] \dots D_m], D_1]$ equals
$(\phi_1\ldots\phi_m)^sD_1$ for any positive integer $s$. Hence if the
function $\phi_1\ldots\phi_m$ is non-constant then the algebra $\Lie(\DD)$ has infinite dimension, a contradiction.

\bigskip

Now we come to the ``if'' direction in (1) and (2).

\begin{proposition} \label{propnil} %
Assume that the set $\DD$ contains no cyclic subset. Then the Lie algebras $\LIE(\DD)$ and $\Lie(\DD)$ are finite-dimensional and nilpotent.
\end{proposition}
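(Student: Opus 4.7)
My plan proceeds in three steps, with the main content being an inductive structural claim (Step~1) on nonzero iterated brackets.

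Throughout I work with the broader class of \emph{generalized root derivations} $D_{\psi,\alpha,E}$ of $K_0(M)$ defined by $D_{\psi,\alpha,E}(\chi^u)=\psi\cdot\alpha(u)\cdot\chi^{u+E}$ with $\psi\in K_0$, $\alpha\in N\otimes_\ZZ K_0$, $E\in M$. A direct computation extending Lemma~\ref{lemma} yields
\[
[D_{\psi_1,\alpha_1,f_1},D_{\psi_2,\alpha_2,f_2}] = D_{\psi_1\psi_2,\,\alpha_1(f_2)\alpha_2-\alpha_2(f_1)\alpha_1,\,f_1+f_2},
\]
so this class is closed under brackets and every nested bracket of $D_1,\dots,D_m$ is of this form, with multiplicative datum $\psi=\prod_i\phi_i^{n_i}$ and degree $E=\sum_i n_i e_i$, where $n_i$ records how many times $D_i$ appears.

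\smallskip\noindent\textbf{Step 1 (Key claim).} For every nonzero nested bracket $B=D_{\psi,\alpha,E}$ in $\Lie(\DD)$ and every ray $\rho\in\{\rho_1,\dots,\rho_m\}$, either $\rho(E)\ge 0$, or $\rho(E)=-1$ and $\alpha\in K_0\cdot\rho$. I prove this by induction on the length of the bracket. The base case $B=D_i$ is immediate from the Demazure conditions. For $B=[D_j,B']$, the case $\rho\ne\rho_j$ follows from $\rho(e_j)\ge 0$ combined with the induction hypothesis on $B'$. In the case $\rho=\rho_j$, the inductive hypothesis gives $\rho_j(E')\ge -1$; if $\rho_j(E')\ge 1$ then $\rho_j(E)\ge 0$, if $\rho_j(E')=0$ then the commutator formula yields $\alpha=-\alpha'(e_j)\rho_j\in K_0\rho_j$, and if $\rho_j(E')=-1$ the induction gives $\alpha'=c\rho_j$, whence the decisive cancellation
\[
\alpha=\rho_j(E')\alpha'-\alpha'(e_j)\rho_j=-c\rho_j-c\cdot(-1)\rho_j=0
\]
forces $B=0$, contradicting the assumption.

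\smallskip\noindent\textbf{Step 2 (Bounded multi-degrees).} The no-cyclic-subset hypothesis means that the directed graph with edge $i\to j$ iff $\langle\rho_j,e_i\rangle>0$ has no directed cycle, and a short direct check shows the induced graph on the distinct rays is again a DAG (a cycle there would pull back to a cyclic subset of $\DD$). Fixing a topological order $\rho^{(1)},\dots,\rho^{(r)}$ of the rays, the inequality $\rho^{(s)}(E)\ge -1$ from Step~1 becomes
\[
\sum_{i:\,\rho_i=\rho^{(s)}} n_i \;\le\; 1 + \sum_{s'<s}\;\sum_{i:\,\rho_i=\rho^{(s')}} n_i\,\rho^{(s)}(e_i),
\]
a triangular system of inequalities that recursively bounds all $n_i$. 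Hence only finitely many multi-degrees $q=(n_1,\dots,n_m)$ can carry a nonzero bracket.

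\smallskip\noindent\textbf{Step 3 (Finite-dimensionality and nilpotency).} For a fixed multi-degree $q$, the scalar $\psi_q=\prod_i\phi_i^{n_i}$ and the degree $E_q=\sum_i n_i e_i$ are fixed, so the homogeneous piece of $\Lie(\DD)$ (resp.\ $\LIE(\DD)$) in multi-degree $q$ embeds $\KK$-linearly (resp.\ $K_0$-linearly) into $N\otimes\KK$ (resp.\ $N\otimes K_0$) via $D_{\psi_q,\alpha,E_q}\mapsto\alpha$, hence has dimension at most $\rk N$. Together with Step~2 this gives finite-dimensionality of both Lie algebras. Nilpotency is then automatic from the multi-grading: the $k$-th term of the lower central series lies in $\bigoplus_{|q|\ge k}\Lie(\DD)_q$, which vanishes as soon as $k$ exceeds the bound on $|q|$ from Step~2.

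The main obstacle is Step~1: the plain inequality $\rho(E)\ge -1$ is not preserved under the commutator, and one must carry along the auxiliary boundary condition $\alpha\in K_0\rho$ precisely to make the delicate cancellation $-c\rho_j-c\rho_j(e_j)\rho_j=0$ available; this cancellation in turn relies crucially on $\rho_j(e_j)=-1$.
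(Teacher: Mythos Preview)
Your proof is correct and takes a genuinely different route from the paper in its inductive core.

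The paper first shows, using the no-cyclic hypothesis from the outset (Lemma~\ref{lemcom}), that every iterated bracket of elements of $\DD$ is again a \emph{root derivation}, i.e.\ the linear functional $\alpha$ stays proportional to one of the original rays. With a topological order on the rays (Lemma~\ref{lemord}) this yields the stronger ``appropriate vector'' condition on $v(E)=(\kappa_1(E),\dots,\kappa_r(E))$: a single $-1$, zeros to the left, non-negatives to the right; finiteness then follows from Lemma~\ref{lemele}. Your Step~1 instead isolates a weaker but self-contained invariant $\rho(E)\ge -1$ (with the boundary clause $\alpha\in K_0\rho$) that holds for \emph{arbitrary} nested brackets of root derivations, with no-cyclic playing no role; the crucial cancellation $-c\rho_j-c\rho_j(e_j)\rho_j=0$ at the boundary is exactly what the paper's stronger conclusion encodes implicitly. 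Only in Step~2 do you invoke the no-cyclic hypothesis, to turn the inequalities $\rho^{(s)}(E)\ge -1$ into a triangular system. This separates the two ingredients cleanly: the paper's approach yields more structural information (iterated brackets stay in the class of root derivations, which is reused later in the paper), while yours gives a more modular argument and shows that the inequality $\rho(E)\ge -1$ is a general feature of the bracket, independent of acyclicity.

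Two small remarks on Step~3. First, $\Lie(\DD)$ need not literally carry a $\ZZ^m$-grading (relations such as $[D_i,D_j]=[D_k,D_l]$ can mix multi-degrees), so the symbol $\bigoplus$ and the phrase ``homogeneous piece'' are abusive; what you actually use, and what suffices, is that $\Lie(\DD)=\sum_q V_q$ where $V_q$ is the image of the multi-degree-$q$ part of the free Lie algebra, and that the $k$-th lower-central term lies in $\sum_{|q|\ge k}V_q$. Second, the embedding $V_q\hookrightarrow N\otimes\KK$ is valid because the commutator formula produces $\alpha$'s with \emph{integer} (hence $\KK$-) coefficients in the $\rho_i$, which you may want to state explicitly.
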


\begin{proof} We subdivide the proof into several lemmas.

\begin{lemma} \label{lemcom} %
Let $D_i,D_j\in\DD$. Then the commutator $[D_i,D_j]$ is either zero or a root derivation. More precisely, if $\langle \rho_i,e_j\rangle>0$ then $[D_i,D_j]=D_{\psi,\rho_j,e_i+e_j}$, with $\psi=\langle\rho_i,e_j\rangle\phi_i\phi_j$.
\end{lemma}

\begin{proof}
By Lemma~\ref{lemma}, if either $\rho_i=\rho_j$ or we have $\langle \rho_i,e_j\rangle=\langle \rho_j,e_i\rangle=0$, then $[D_i,D_j]=~0$. If $\langle \rho_i,e_j\rangle>0$ then
$\langle \rho_j,e_i\rangle=0$; otherwise $\{D_i,D_j\}$ is a cyclic subset. Now the assertion follows from Lemma~\ref{lemma}.
\end{proof}

Let now $\kappa_1,\ldots,\kappa_r$ be pairwise distinct elements of the set $\{\rho_1,\ldots,\rho_m\}$. Then every $\rho_j$ coincides with a unique $\kappa_s$, and we set $\alpha(j)=s$. This defines a map $\alpha\colon\{1,\ldots,m\}\to\{1,\ldots,r\}$.

\begin{lemma} \label{lemord}
One can reorder the set $\{1,\ldots,r\}$ in such a way that the condition
$\langle \rho_i,e_j\rangle>0$ implies $\alpha(i)>\alpha(j)$.
\end{lemma}

\begin{proof}
If for every $\rho_i$ there is an element $e_j$ with $\langle \rho_i,e_j\rangle>0$, then we find a cyclic subset in $\DD$. Hence there is $\rho_i$ such that $\langle \rho_i,e_j\rangle\le 0$
for every $j$. Then we set $\rho_i=\kappa_1$ and proceed by induction on $r$.
\end{proof}

From now on we assume that the condition $\langle \rho_i,e_j\rangle>0$
implies $\alpha(i)>\alpha(j)$.

\begin{lemma} \label{lempom}
  Let $D_i$, $D_j$, $D_s$ be elements in $\DD$. If
  $\langle \rho_i,e_j\rangle>0$ and $\langle \rho_s,e_i+e_j\rangle>0$,
  then $\alpha(s)>\alpha(j)$.  In particular, the set
  $\DD\cup\{[D_i,D_j]\}$ contains no cyclic set.
\end{lemma}

\begin{proof}
  If $\langle \rho_s,e_j\rangle>0$ then $\alpha(s)>\alpha(j)$. If
  $\langle \rho_s,e_i\rangle>0$ then
  $\alpha(s)>\alpha(i)>\alpha(j)$. The second assertion follows from
  the first one and Lemma~\ref{lemcom}.
\end{proof}

With any vector $e\in M$ we associate a vector $v(e)\in\ZZ^r$,
$v(e):=(\langle\kappa_1,e\rangle,\ldots,\langle\kappa_r,e\rangle)$. Let us
say that a vector $v\in\ZZ^r$ is \emph{appropriate} if there is an
index $i$ such that $v_j=0$ for $j<i$, $v_i=-1$, and $v_j\ge 0$ for
$j>i$.

Clearly, the vector $v(e_i)$ is appropriate for every root $e_i$
corresponding to a derivation in $\DD$. Moreover, by
Lemma~\ref{lempom} every multiple commutator
$[\ldots,[D_{i_1},D_{i_2}],\ldots,D_{i_k}]$ with
$D_{i_1},D_{i_2},\ldots,D_{i_k}\in\DD$ is either zero or the vector
$v(e_{i_1}+e_{i_2}+\ldots+e_{i_k})$ is appropriate.

\smallskip

The proof of the following lemma is elementary and left to the reader.

\begin{lemma} \label{lemele} Consider a finite collection of
  appropriate vectors in $\ZZ^r$. Then only finitely many linear
  combinations of these vectors with non-negative integer coefficients
  are appropriate vectors.
\end{lemma}

Since
$v(e_{i_1}+e_{i_2}+\ldots+e_{i_k})=v(e_{i_1})+v(e_{i_2})+\ldots+v(e_{i_k})$,
we conclude from Lemma~\ref{lemele} that only finitely many multiple
commutators of the derivations $D_1,\ldots,D_m$ can be nonzero. So the
Lie algebras $\LIE(\DD)$ and $\Lie(\DD)$ are finite-dimensional and
nilpotent. This completes the proof of Proposition~\ref{propnil}.
\end{proof}

\begin{proposition} \label{propss} %
  Assume that every element of $\DD$ is contained in a cyclic subset
  and every cyclic subset is almost simple. Then
  $\DD=\DD_1\sqcup\ldots\sqcup\DD_s$, where $\DD_1,\ldots,\DD_s$ are
  maximal cyclic subsets. The Lie algebra $\LIE(\DD)$ is isomorphic to
  $\sl_{\, r_1}(K_0)\oplus\ldots\oplus\sl_{\,r_s}(K_0)$ for some
  positive integers $r_1,\ldots,r_s$. Moreover, if every cyclic subset
  of $\DD$ is simple then the Lie algebra $\Lie(\DD)$ is isomorphic to
  $\sl_{\, r_1}(\KK)\oplus\ldots\oplus\sl_{\,r_s}(\KK)$.
\end{proposition}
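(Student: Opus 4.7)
The plan is to partition $\DD$ into pieces each generating one simple factor of $\LIE(\DD)$, verify that different pieces commute, and apply Proposition~\ref{propsl} to each.

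First, because every $D\in\DD$ lies in an almost simple cyclic subset by hypothesis, each root $e(D)$ is \emph{elementary}: it evaluates to $-1$ on $\rho(D)$, to $+1$ on a unique ray $\nu(D)\in\Sigma(1)$, and to $0$ on every other ray. I then form the directed graph $\Gamma$ whose vertex set is $\{\rho(D),\nu(D):D\in\DD\}\subseteq\Sigma(1)$ and whose edges are the derivations themselves, each $D$ oriented from $\rho(D)$ to $\nu(D)$. Since each $D$ belongs to a cyclic subset, every edge of $\Gamma$ lies on a directed cycle, so the weakly connected components of $\Gamma$ are strongly connected. Let $\DD_1,\ldots,\DD_s$ be the edge-sets of these components; they partition $\DD$, and one verifies that each $\DD_t$ is itself a cyclic subset of $\DD$, maximal as such.

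Second, I verify commutation across components. If $D\in\DD_a$, $D'\in\DD_b$ with $a\ne b$, then the ray sets $\{\rho(D),\nu(D)\}$ and $\{\rho(D'),\nu(D')\}$ lie in distinct components of $\Gamma$ and so are disjoint. Almost simplicity of any cyclic subset containing $D'$ then gives $\langle\rho,e(D')\rangle=0$ for every $\rho\in\Sigma(1)\setminus\{\rho(D'),\nu(D')\}$, in particular for $\rho=\rho(D)$; symmetrically $\langle\rho(D'),e(D)\rangle=0$. Lemma~\ref{lemcom} then yields $[D,D']=0$.

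Third, Proposition~\ref{propsl}(1) applied to each cyclic almost simple $\DD_t$ gives $\LIE(\DD_t)\cong\sl_{\,r_t}(K_0)$, where $r_t$ is the number of distinct rays in $\DD_t$. The subalgebras $\LIE(\DD_t)\subseteq\LIE(\DD)$ pairwise commute (by the previous step), and each is simple since $r_t\ge 2$; any intersection would be a proper ideal of a simple Lie algebra and hence zero, so their sum is direct. Since $\DD$ is contained in this sum, it equals $\LIE(\DD)$, yielding $\LIE(\DD)\cong\bigoplus_{t=1}^s\sl_{\,r_t}(K_0)$. For the \emph{moreover} statement, if every cyclic subset of $\DD$ is simple then each $\DD_t$ is very simple, so Proposition~\ref{propsl}(2) gives $\Lie(\DD_t)\cong\sl_{\,r_t}(\KK)$, and the same direct-sum argument concludes. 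The main obstacle is producing a cyclic ordering on each $\DD_t$, i.e., an Eulerian circuit on the induced subgraph of $\Gamma$; the argument exploits the fact that every edge of $\Gamma$ lies on a directed cycle together with the local rigidity of almost simplicity to splice overlapping cyclic subsets along shared rays into a single circuit covering $\DD_t$.
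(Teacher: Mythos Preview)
Your overall strategy---partition $\DD$ into commuting pieces and apply Proposition~\ref{propsl} to each---is exactly the paper's. The paper takes the pieces to be the maximal cyclic subsets and shows via a splicing argument (Lemma~\ref{lemdis}) that distinct maximal cyclic subsets have disjoint ray-sets; your connected-component language amounts to the same decomposition. One minor correction: the vanishing $[D,D']=0$ across components should be deduced from Lemma~\ref{lemma} (the commutator formula), not Lemma~\ref{lemcom}, which is stated only inside the proof of Proposition~\ref{propnil} under the extra hypothesis that $\DD$ has no cyclic subset.

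There is, however, a genuine gap at the step you flag as ``the main obstacle'': the edge-set $\DD_t$ of a strongly connected component need not admit an Eulerian circuit, so $\DD_t$ need not be cyclic and Proposition~\ref{propsl} does not apply to it directly. Splicing two cyclic subsets along a shared ray, as you (and the paper, in Lemma~\ref{lemdis}) propose, works only when they share no \emph{edge}; if they share an edge the spliced walk repeats it and is not a cyclic subset. Concretely, take $N=\ZZ^3$, $\Sigma(1)=\{\kappa_1,\kappa_2,\kappa_3\}$ the standard basis, and the four elementary roots $e_1=(-1,1,0)$, $e_2=(0,-1,1)$, $e_3=(1,0,-1)$, $e_4=(-1,0,1)$. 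The only cyclic subsets are $\{D_1,D_2,D_3\}$ and $\{D_3,D_4\}$, both almost simple and both maximal; they share $D_3$, and the full set has out-degree~$2$ but in-degree~$1$ at $\kappa_1$, hence no Eulerian circuit. The Lie-algebra conclusion nevertheless survives: realize each $D\in\DD_t$ as $x_j\,\partial/\partial x_i$ as in the proof of Proposition~\ref{propsl} (this uses only that the roots are elementary), and use $[x_j\partial_i,\,x_i\partial_k]=x_j\partial_k$ together with strong connectivity of the component to obtain every $x_j\partial_i$ with $i\ne j$ ranging over the rays of $\DD_t$, whence $\LIE(\DD_t)\cong\sl_{\,r_t}(K_0)$. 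Replacing your appeal to Proposition~\ref{propsl} on the whole of $\DD_t$ by this direct argument closes the gap.
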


\begin{proof}
  Let $\DD_1$ and $\DD_2$ be two different maximal cyclic subsets of
  $\DD$.

\begin{lemma} \label{lemdis} We have
  $\langle \rho_i,e_j\rangle=\langle \rho_j,e_i\rangle=0$ for any
  $D_i\in\DD_1$, $D_j\in\DD_2$.  In particular, the sets
  $\{\rho(D), D\in\DD_1\}$ and $\{\rho(D), D\in\DD_2\}$ are disjoint.
\end{lemma}

\begin{proof}
  First assume that $\rho_i=\rho_j$. Putting the cyclic set $\DD_2$
  into the cyclic set $\DD_1$ just before the element $D_i$ and
  starting from $D_j$, we obtain a bigger cyclic set, a
  contradiction. This proves that the sets $\{\rho(D), D\in\DD_1\}$
  and $\{\rho(D), D\in\DD_2\}$ are disjoint.

  Assume that $\langle \rho_i,e_j\rangle>0$. Since the root $e_j$ is
  elementary, the vector $\rho_i$ is defined uniquely by this
  property. But $D_j$ is contained in the cyclic set $\DD_2$, and thus
  we should have $\rho_i\in \{\rho(D), D\in\DD_2\}$, a~contradiction.
\end{proof}

By Proposition~\ref{propsl}, we have
$\LIE(\DD_i)\cong\sl_{\,r_i}(K_0)$ and
$\Lie(\DD_i)\cong\sl_{\,r_i}(\KK)$, where $r_i$ is the number of
pairwise distinct elements in the set $\{\rho(D), D\in\DD_i\}$. By
Lemma~\ref{lemma} and Lemma~\ref{lemdis}, the elements from $\DD_i$
and $\DD_j$ with $i\ne j$ commute. Since the Lie algebra $\sl_{\, r}$
is simple, we obtain the assertions of Proposition~\ref{propss}.
\end{proof}

\begin{proposition} \label{semidirect} %
  Assume that every cyclic subset in $\DD$ is almost simple. Denote by
  $\DD'$ the subset of elements of $\DD$ that are contained in cyclic
  subsets. Then we have a semidirect product structure
  $$
  \LIE(\DD)\cong\LIE(\DD')\rightthreetimes \Nf,
  $$
  where $\Nf$ is a finite-dimensional nilpotent ideal in
  $\LIE(\DD)$. If every cyclic subset of $\DD$ is simple, we have
  $$
  \Lie(\DD)\cong\Lie(\DD')\rightthreetimes \nf,
  $$
  where $\nf$ is a finite-dimensional nilpotent ideal in $\Lie(\DD)$.
\end{proposition}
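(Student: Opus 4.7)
The plan is to set $\DD'' := \DD \setminus \DD'$ and take $\Nf$ (resp.~$\nf$) to be the ideal in $\LIE(\DD)$ (resp.~$\Lie(\DD)$) generated by $\DD''$. By the very definition of $\DD'$, the subset $\DD''$ contains no cyclic subset. The desired semidirect product structure reduces to three claims: first, $\LIE(\DD) = \LIE(\DD') + \Nf$; second, $\Nf$ is finite-dimensional and nilpotent; and third, $\LIE(\DD') \cap \Nf = 0$. The first is immediate from $\DD = \DD' \sqcup \DD''$. Granting the second, the third follows at once: $\LIE(\DD') \cap \Nf$ is an ideal of the subalgebra $\LIE(\DD')$ (intersection of a subalgebra with a two-sided ideal) and is nilpotent as a subalgebra of $\Nf$, but Proposition~\ref{propss} identifies $\LIE(\DD')$ with a direct sum of simple algebras $\sl_{\,r_k}(K_0)$, which admits no nonzero nilpotent ideal.

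The main work, and the key obstacle, is the second claim. I would adapt the appropriate-vector argument from the proof of Proposition~\ref{propnil}. By Lemma~\ref{lemcom}, iterated commutators of root derivations are either zero or again root derivations, so $\Nf$ is the $K_0$-span of root derivations arising as iterated commutators of elements of $\DD$ in which at least one factor lies in $\DD''$. I would then assign to each root $e$ the vector $v(e) = (\langle \kappa_1,e\rangle,\ldots,\langle \kappa_r,e\rangle) \in \ZZ^r$, where $\kappa_1,\ldots,\kappa_r$ enumerate the distinct rays of $\DD$, and exploit three structural facts: within a maximal cyclic subset $\DD_l$ of $\DD'$ the relation $\sum_i e_{l,i}=0$ (Remark~\ref{ur}) holds, so commutators involving only $\DD_l$-elements stay in the finite-dimensional algebra $\LIE(\DD_l) \cong \sl_{\,r_l}(K_0)$; the maximal cyclic subsets $\DD_l$ have pairwise disjoint ray-sets $R_l$ by Lemma~\ref{lemdis}; and no element of $\DD''$ can extend some $\DD_l$ to a larger cyclic subset, else it would belong to $\DD'$.

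The plan is then to ``contract'' each block $R_l$ to a single index, after which the images $\bar v(e_j)$ of vectors associated to $D_j \in \DD''$ become appropriate in the contracted $\ZZ^{s+|R_0|}$, where $R_0 := \{\kappa_1,\ldots,\kappa_r\}\setminus\bigcup_l R_l$. An analogue of Lemma~\ref{lemord} produces a linear order on blocks and on $R_0$-rays compatible with these appropriate vectors, and Lemma~\ref{lemele} bounds the set of degrees occurring in $\Nf$. Since each degree supports at most a one-dimensional $K_0$-space in $\LIE(\DD)$, finite-dimensionality of $\Nf$ follows; the same ordering forces a strict decrease along iterated brackets, yielding nilpotency. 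The hardest step will be this contraction argument: I must carefully check that collapsing each $\DD_l$ yields a combinatorial structure on which Lemma~\ref{lemord} and Lemma~\ref{lemele} still apply, tracking how the cancellation $\sum_i e_{l,i}=0$ interacts with the $\DD''$-contributions. The argument for $\Lie(\DD)$ and $\nf$ is identical, using Proposition~\ref{propss} for $\Lie(\DD') \cong \bigoplus_k \sl_{\,r_k}(\KK)$.
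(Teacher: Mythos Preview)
Your overall architecture is sound and matches the paper's: take $\Nf$ to be the ideal generated by $\DD'':=\DD\setminus\DD'$, note $\LIE(\DD)=\LIE(\DD')+\Nf$, and deduce $\LIE(\DD')\cap\Nf=0$ from the semisimplicity of $\LIE(\DD')$ (Proposition~\ref{propss}) once $\Nf$ is known to be nilpotent. The gap lies entirely in your second claim.

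You invoke Lemma~\ref{lemcom} to assert that iterated commutators of elements of $\DD$ are root derivations, but that lemma is stated and proved \emph{inside} Proposition~\ref{propnil}, under the standing hypothesis that $\DD$ contains no cyclic subset. Here $\DD'\neq\emptyset$, and for $D_i,D_j$ in the same maximal cyclic subset one can have $\langle\rho_i,e_j\rangle>0$ and $\langle\rho_j,e_i\rangle>0$ simultaneously; by Lemma~\ref{lemma} the bracket $[D_i,D_j]$ is then a semisimple Cartan-type derivation, not a root derivation. Such elements arise whenever iterated brackets pass through $\LIE(\DD')$, so the blanket claim that $\Nf$ is spanned by root derivations is unjustified, and the assertion that ``each degree supports at most a one-dimensional $K_0$-space in $\LIE(\DD)$'' is false as written (the degree-zero part of $\LIE(\DD')\cong\bigoplus_l\sl_{\,r_l}(K_0)$ already has dimension $\sum_l(r_l-1)$). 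Your contraction idea may be salvageable, but making it rigorous requires precisely controlling what happens when an element of $\DD'$ is bracketed against an element outside $\DD'$---and that is exactly the content you are missing.

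The paper supplies this missing step as a separate computational lemma: for $D_i\in\DD'$ and $D_j\in\DD\setminus\DD'$, the bracket $\widehat D=[D_i,D_j]$ is either zero or a root derivation that lies in no cyclic subset of $\DD\cup\{\widehat D\}$. The proof is a four-case analysis exploiting that the root $e_i$ is elementary. One then iterates, adjoining each such $\widehat D$ to $\DD$; two short degree estimates (one for $\rho_j\notin\rho(\DD')$, one for $\rho_j\in\rho(\DD')$) force termination. The resulting enlarged collection contains $\DD\setminus\DD'$, has no cyclic subset, and is stable under bracketing by $\DD'$, so Proposition~\ref{propnil} applies to it directly and yields the finite-dimensional nilpotent ideal. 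In effect the paper constructs $\Nf$ concretely as the Lie algebra generated by this enlarged set, thereby never confronting commutators of two $\DD'$-elements at all.
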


\begin{proof}

We begin with a computational lemma.

\begin{lemma}

Take $D_i\in\DD'$, $D_j\in\DD\setminus \DD'$ and let
$\widehat{D}=[D_i,D_j]$. Then either $\widehat{D}=0$ or 
$\widehat{D}$ is a root derivation and $\widehat{D}$ is not 
contained in a cyclic subset of $\DD\cup\{\widehat{D}\}$.

\end{lemma}

\begin{proof}

We subdivide the proof into four cases.

\smallskip

{\it Case 1.} If $\rho_i=\rho_j$ or
$\langle \rho_i,e_j\rangle=\langle \rho_j,e_i\rangle=0$, then
$\widehat{D}=0$.

\smallskip

{\it Case 2.} If $\langle \rho_i,e_j\rangle>0$ and
$\langle \rho_j,e_i\rangle>0$, then $\{D_i,D_j\}$ is a cyclic set, a
contradiction with the choice of $D_j$.

\smallskip

{\it Case 3.} Assume that $\langle \rho_i,e_j\rangle>0$ and
$\langle \rho_j,e_i\rangle=0$. By Lemma~5.2, we have
$\widehat{D}=D_{\psi,\rho_j,e_i+e_j}$. Assume that $\widehat{D}$ is
contained in a cyclic subset, say
$\ldots,D_k,\widehat{D},D_s\ldots$. Then $\langle \rho_j,e_k\rangle>0$
and ${\langle \rho_s,e_i+e_j\rangle>0}$. If
$\langle \rho_s,e_j\rangle>0$ then we replace $\widehat{D}$ by $D_j$
in the cyclic subset and obtain a cyclic subset, a~contradiction with the
choice of $D_j$. If $\langle \rho_s,e_j\rangle=0$ and
$\langle \rho_s,e_i\rangle>0$ then we replace $\widehat{D}$ by the~elements
$D_j,D_i$ in the cyclic subset and obtain a cyclic subset
containing $D_j$, again a contradiction.

\smallskip

{\it Case 4.} Assume that $\langle \rho_i,e_j\rangle=0$ and
$\langle \rho_j,e_i\rangle>0$. Since the root $e_i$ is elementary, we
have $\rho_j\in\rho(\DD')$.  By Lemma~5.2, we obtain
$\widehat{D}=D_{\psi,\rho_i,e_i+e_j}$. Assume that $\widehat{D}$ is
contained in a cyclic subset, say
$\ldots,D_k,\widehat{D},D_s\ldots$. Then $\langle \rho_i,e_k\rangle>0$
and $\langle \rho_s,e_i+e_j\rangle>0$. By assumptions of
Proposition~5.10, the~root $e_i$ is elementary. It implies
$\langle \rho_j,e_i\rangle=1$. If $\langle \rho_s,e_j\rangle>0$ then
$\ldots,D_k,D_i,D_j,D_s\ldots$ is a cyclic subset, a contradiction. If
$\langle \rho_s,e_j\rangle\le 0$ and $\langle \rho_s,e_i\rangle>0$ then
$\rho_s=\rho_j$ ($e_i$ is elementary) and
$$
\langle \rho_s,e_i+e_j\rangle=\langle \rho_j,e_i\rangle+\langle
\rho_j,e_j\rangle=1-1=0,
$$
a contradiction. This concludes the proof of Lemma~5.11.
\end{proof}

If $\widehat{D}\ne 0$, we add $\widehat{D}$ to $\DD$ and continue the
process. Let us prove that for any $e_j\in e(\DD\setminus\DD')$ the number of
roots of the form $e_j+\sum_{e\in e(\DD')} \lambda_e e$ with non-negative integer
coefficients $\lambda_e$ is finite.

Since such a linear expression may be not unique, we consider only
expressions, where the sum $\sum\lambda_e$ is minimal. Consider the
subset $E\subset e(\DD')$ consisting of all elementary roots $e$ such
that $\lambda_e>0$. Then the subset $E$ contains no cycle, because the
sum of roots in a cycle is zero.  We claim that the number of roots
$e_j+\sum_{e\in E} \lambda_e e$ with given $e_j$ and $E$ is finite.

Since $E$ contains no cycle, we can order the rays
$(\kappa_1,\ldots,\kappa_p)$ associated with roots in $E$ in such a
way that for any $e\in E$ the vector
$v(e)=(\langle\kappa_1,e\rangle,\ldots,\langle\kappa_p,e\rangle)$ is
appropriate. A slight generalization of Lemma~5.7 shows that only
finitely many vectors of the form
$v(e_j)+\sum_{e\in E} \lambda_e v(e)$ have all coordinates at least
$-1$. This implies the claim.

Thus the number of non-zero commutators is also finite. This shows
that the process stops in finitely many steps with a set $\DD''$. In
particular, the set $\DD''\setminus\DD'$ is stable under taking
commutators with elements of $\DD'$.
 
By Proposition~\ref{propnil}, the Lie algebra generated by
$\DD''\setminus\DD'$ is finite dimensional and nilpotent. Since its
generating set is stable under taking commutators with elements of
$\DD'$, the same holds for the whole algebra. We conclude that
$\LIE(\DD)\cong\LIE(\DD')\rightthreetimes\LIE(\DD''\setminus\DD')$ and
$\Lie(\DD)\cong\Lie(\DD')\rightthreetimes\Lie(\DD''\setminus\DD')$. This
concludes the proof of Proposition~\ref{semidirect}.
\end{proof}

Finally, Proposition~\ref{propss} and Proposition~\ref{semidirect}
prove the ``if'' direction in (1) and (2). This completes the proof of
Theorem~\ref{findim}.


\section{A Demazure type theorem} \label{s6}

\begin{definition}
An affine algebraic group $G$ over the ground field $\KK$ is said to be \emph{a group of type~A} if a~maximal semisimple subgroup of $G$ is isomorphic to a factor group of 
${\SL_{r_1}(\KK)\times\ldots\times\SL_{r_s}(\KK)}$ for some positive integers $r_1,\ldots,r_s$ by a finite central subgroup.
\end{definition}

\begin{definition}
Let $X$ be an algebraic variety with an action of an algebraic torus $\TT$. An effective action $G\times X\to X$ of an algebraic group $G$ is said to be \emph{vertical}, if the image of $G$ in the group $\Aut(X)$ is normalized by $\TT$ and general $G$-orbits on $X$ are contained in closures of $\TT$-orbits.
\end{definition}

The following theorem is a generalization of a classical result due to Demazure: the automorphism group of a complete toric variety is an affine algebraic group of type~A, see \cite[Proposition~3.3]{De}.

\begin{theorem} \label{demazure} %
Let an affine algebraic group $G$ admit a vertical action on a $\TT$-variety $X$. Then $G$ is a~group of type~A.
\end{theorem}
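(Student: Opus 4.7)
The plan is to identify the Lie algebra of a maximal semisimple subgroup of $G$ with a Lie algebra of root derivations of the function field of $X$, and then to apply Theorem~\ref{findim} together with Propositions~\ref{propss} and~\ref{semidirect} to constrain its structure. First, I would reduce to $G$ connected (the component group is finite and does not affect the type~A property) and take a Levi decomposition $G=L\cdot R_u(G)$ with $L$ reductive, so that a maximal semisimple subgroup of $G$ is $[L,L]$; it then suffices to show that $\Lie([L,L])$ is a direct sum of copies of $\sl_{\,r}(\KK)$.

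Next, using that $\TT$ normalizes $G$ in $\Aut(X)$, the torus $\TT$ acts on $G$ by algebraic automorphisms, and a standard fixed-point argument (Borel's theorem applied to the variety of Borel subgroups of $L$, then inside a $\TT$-stable Borel) yields a $\TT$-stable maximal torus $T_L\subset L$. Each root subgroup $U_\alpha\subset L$ with respect to $T_L$ is then normalized by $\TT$, giving a compatible $\GA$-action on $X$, and verticality of the $G$-action yields $\KK(X)^\TT\subseteq\KK(X)^{U_\alpha}$, so each $U_\alpha$-action is vertical. Passing to $\KK(X)=K_0(M)$ with $K_0=\KK(X)^\TT$ and $M$ the character lattice of $\TT$, the algebraic analysis underlying Theorem~\ref{toric-regular} (applied to the generic $\TT$-orbit closure, which is a toric variety over $K_0$) shows that the infinitesimal generator of each $U_\alpha$ is a root derivation $D_\alpha\in\Der(K_0(M))$ with respect to the tail fan $\Sigma$ of $X$.

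Setting $\DD=\{D_\alpha\}$, the $\KK$-algebra $\Lie(\DD)$ coincides with $\Lie([L,L])$, since the Chevalley root vectors generate the semisimple Lie algebra $\Lie([L,L])$. In particular $\Lie(\DD)$ is finite-dimensional, so Theorem~\ref{findim}(2) forces every cyclic subset of $\DD$ to be simple, and Propositions~\ref{propss} and~\ref{semidirect} yield a decomposition
$$
\Lie([L,L])\cong\bigl(\sl_{\,r_1}(\KK)\oplus\ldots\oplus\sl_{\,r_s}(\KK)\bigr)\rightthreetimes\nf
$$
for some nilpotent ideal $\nf$. Semisimplicity of $[L,L]$ forces $\nf=0$, so the simply connected cover of $[L,L]$ is $\SL_{r_1}(\KK)\times\ldots\times\SL_{r_s}(\KK)$ and $[L,L]$ is its quotient by a finite central subgroup, proving that $G$ is of type~A.

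The main obstacle I foresee is the transition from the group-theoretic data of the root subgroups $U_\alpha$ to the field-theoretic root derivations $D_\alpha\in\Der(K_0(M))$. This passage is not covered verbatim by Theorem~\ref{ver-regular} because $X$ need not be semiaffine, so one must argue directly at the level of derivations of the generic $\TT$-orbit closure, and one must also justify the existence of the $\TT$-stable maximal torus and the compatibility of each $U_\alpha$-action before Theorem~\ref{findim} can be invoked.
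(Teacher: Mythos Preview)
Your overall strategy matches the paper's endgame (produce $\TT$-normalized $\GA$-subgroups, identify their infinitesimal generators as root derivations, then invoke Theorem~\ref{findim} with Propositions~\ref{propss} and~\ref{semidirect}), but the obstacle you flag is real and you have not closed it. The machinery of Sections~\ref{s4}--\ref{s5} is set up for a fan with convex support, and Theorems~\ref{toric-regular} and~\ref{ver-regular} both assume semiaffineness; your suggestion to ``argue directly at the level of the generic $\TT$-orbit closure'' does not supply the missing hypothesis, since neither the tail fan of an arbitrary $\TT$-variety nor its generic orbit closure need be semiaffine. The paper resolves this in one stroke: by Sumihiro's equivariant completion \cite{Su}, one embeds $X$ both $\TT$- and $G$-equivariantly into a complete variety $X'$. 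Then $X'$ is semiaffine, Theorem~\ref{ver-regular} applies verbatim, and the infinitesimal generators of the $\TT$-normalized $\GA$-subgroups are honest root derivations with respect to the (complete, hence convex-support) tail fan of $X'$.

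There is a second, smaller gap in your reduction: applying Borel's fixed-point theorem to the flag variety of $L$ presupposes that $\TT$ acts on $L$, i.e., that your chosen Levi subgroup is already $\TT$-stable, which you have not justified. The paper sidesteps this by replacing $G$ with the subgroup $G'$ generated by all $\GA$-subgroups of $G$; this $G'$ is automatically $\TT$-stable, is the product of a maximal semisimple subgroup of $G$ with the unipotent radical, and one obtains the required $\TT$-normalized $\GA$-subgroups simply by diagonalizing $\Lie(G')$ under the $\TT$-action, with no internal maximal torus needed.
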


\begin{proof}
By \cite[Theorem~3]{Su} we can embed $X$ equivariantly into a completion $X'$ so that now $\TT$ and $G$ act on the variety $X'$. In particular, the variety $X'$ is semiaffine. Let us also replace the group $G$ by its subgroup $G'$ generated by all $\GA$-subgroups in $G$. It is well know that $G'$ is a semidirect product of a maximal semisimple subgroup of $G$ and the unipotent radical of $G$. The torus $\TT$ acts on $G$ by automorphisms and thus preserves the subgroup $G'$. The $\TT$-action on the tangent algebra $\gf$ of $G'$ is diagonalizable and the group $G'$ is generated by $\TT$-normalized $\GA$-subgroups. They correspond to vertical regularly integrable derivations of $X'$ and the Lie algebra $\gf$ is generated by such derivations. Now, Theorem~\ref{findim} and Propositions~\ref{propss} and~\ref{semidirect} complete the proof.
\end{proof}

Remark that taking $X$ to be a complete toric variety we recover Demazure's result since any
effective action $G\times X\to X$ normalized by the acting torus $\TT$ is vertical.


\end{document}